\newtheorem{theorem}{Theorem}
\newtheorem{claim}[theorem]{Claim}
\newtheorem{definition}[theorem]{Definition}
\newtheorem{lemma}[theorem]{Lemma}
\newtheorem{remark}[theorem]{Remark}
\providecommand{\Keywords}[1]{\textbf{\textit{Key words:\ }} #1}
\providecommand{\MSCcodes}[1]{\textbf{\textit{Mathematics Subject Classification:\ }} #1}
\title{Non-negativity and zero isolation for generalized mixtures of densities
\thanks{
{Fundings: part of this research resulted from the SFI projects 07/MI/008 and RFP2007-MATF802, and the European Union - FSE REACT-EU, PON Ricerca e Innovazione 2014-2020} }
}
\author{
  Stefano Bonaccorsi 
  \thanks{Department of Mathematics, Universit\`a degli Studi di Trento,  Italy, (\texttt{stefano.bonaccorsi@unitn.it}, \texttt{giulia.lombardi@unitn.it}) }
   \And
  Bernard Hanzon 
  \thanks{School of Mathematical Sciences, University College Cork, Ireland, \texttt{b.hanzon@ucc.ie} }
\And
  Giulia Lombardi  \footnotemark[2]
}
\begin{document}
\maketitle

\begin{abstract}
In the literature, finite mixture models are described as linear combinations of probability distribution functions having the form
$\displaystyle f(x) = \Lambda \sum_{i=1}^n w_i f_i(x)$,  $x \in \mathbb{R}$,
where $w_i$ are positive weights, $\Lambda$ is a suitable normalising constant and $f_i(x)$ are given probability density functions.
The fact that $f(x)$ is a probability density function
follows naturally in this setting.
Our question is: 
\emph{what happens when we remove the sign condition on the coefficients $w_i$?}
\\
The answer is that it is possible to determine the sign pattern of the function $f(x)$ by an algorithm based on finite sequence that we call a \emph{generalized Budan-Fourier sequence}. In this paper we provide theoretical motivation for the functioning of the algorithm, and we describe with various examples its strength and possible applications.
\end{abstract}

\Keywords{
finite mixtures, Gaussian mixtures, Exponential-Polynomial-Trigonometric mixtures, zero isolation.
}

\MSCcodes{
60E05, 62H30
}

\section{Introduction}\label{sec:intro}

The first instance of a finite mixture probabilistic model goes back to the work of Pearson \cite{Pearson1894}, where a two-component normal mixture is used to fit a dataset of body lengths of crabs from the bay of Naples.
Since then, they have been used in the (statistical) description of a variety of problems: for a survey of these applications, we refer for instance to \cite{Titterington1985, McLachlan2000, Fruhwirth2006}.
The analysis of neural networks having response functions of this form, without limitation on the sign of the coefficients, goes back at least to the paper \cite{Park1991} (where finite Gaussian mixtures are studied).
It is worth mentioning the recent applications to cluster analysis \cite{Everitt2011} and unsupervised Machine Learning problems 
\cite{Aquilanti2020, Aquilanti2021}.
Although in this paper we shall only consider univariate distributions, the applications of finite mixture models naturally extend to the multivariate case.

We shall further consider a broader definition of finite mixture models rather than Gaussian mixtures, taking into account functions of the form
\begin{align}
\label{e1}
f(x) = \Re \left( \sum_{i=1}^n p_i(x) e^{q_i(x)} \right), \qquad x \in I,
\end{align}
where $\Re(z)$ denotes the real part of the complex number $z$, and $\{p_i(x),\ i=1, \dots, n\}$ and $\{q_i(x),\ i=1, \dots, n\}$ are given complex-valued polynomials
on an interval $I \subset \mathbbm{R}$ which we allow to be finite, or semi-infinite (such as $I = \mathbbm{R}_+ \colon= \{x \ge 0\}$), or infinite ($I = \mathbbm{R}$).

For example, consider a mixture of Erlang distributions
$f_i(x) = x^{m_i - 1} e^{-\lambda_i x}$, $x \in [0,\infty)$, which leads to the  distribution 
\begin{align}\label{eq:ERL-example}
f(x) = \Lambda \sum_{i=1}^n w_i x^{m_i-1} e^{-\lambda_i x}, \qquad x \in [0,\infty),\ \lambda_i > 0, i=1, \dots, n
\end{align}
for a suitable normalizing constant $\Lambda$, where 
either $\lambda_{i+1} > \lambda_i$, or $\lambda_{i+1} = \lambda_i$ and $m_{i+1} > m_i$.
If the coefficients $w_i$ are positive, then
$f$ is a probability density function. 
%
%

Suppose that the Erlang mixture in \eqref{eq:ERL-example} has the further property that the scale parameters are the same for all the terms: $\lambda_i = \lambda$.
Then, the problem of proving that $f$ is a probability density function reduces to the study of the positivity of the polynomial $\displaystyle p(x) = \sum_{i=1}^n w_i x^{m_i-1}$. 
This is a classical problem in the mathematical literature and can be tackled in several ways.
We focus here on the \emph{Budan-Fourier} approach, 
introduced in \cite{Gonzalez1998}, see also \cite{Coste2005, Galligo2013}.
The idea underlying this approach is that there
exists a stepwise constant, non-increasing function $V_p(x)$, taking integer values, with $V_p(-\infty) = \kappa \colon= {\rm deg}(p)$ and $V_p(\infty) = 0$, such that the jump points of $V_p(x)$ correspond to the \emph{virtual roots} (which include the real roots of $p$
) of $p$. 
The \emph{Budan-Fourier theorem} states that the number of real roots of $p$ in the interval $(a,b]$ (counted with multiplicity) is at most $V_p(b) - V_p(a)$, and the defect is an even integer.

The Budan-Fourier approach is based on a sequence of functions associated with the derivatives of the function that is to be investigated. 
In the original BF approach this sequence is the sequence of higher order derivatives; in \cite{Coste2005}, $\pmb f$-derivatives are used, which allows to extend the range of application of the result.
In general, however, if a function is not a polynomial, the sequence of nontrivial higher order derivatives is not finite and the method cannot be applied. 
In our approach the sequence of higher order derivatives is replaced by another finite sequence that we will call a \emph{generalized Budan-Fourier sequence} (GBF sequence).

In   \cref{s:GBF} we present in detail  the functioning of the GBF Algorithm. 
This method has been formerly proposed in \cite{Hanzon2012} with the purpose of computing all the sign-changing roots of EPT (exponential-polynomial-trigonometric) functions within a finite interval.
We provide an application to EPT functions in  \cref{s:EPT}.
Next,  \cref{s:GGM} and \cref{sec:ggm} are devoted to the analysis of Gaussian mixtures. 
There we first construct the GBF sequence in the general framework of Gaussian mixtures with polynomial coefficients
$\displaystyle f(x) = \sum_{j=1}^n p_j(x) \, e^{q_j(x)}$,
where $\{q_j(x)\}$ are second order polynomials with real coefficients and negative leading term.
We provide a simple, yet completely explicit example, of a polynomial Gaussian mixture depending on a parameter, and we show that the behaviour of the mixture may be very different according to the values of this parameter.

Next \cref{sec:ggm} is  devoted to finite Gaussian mixtures  $f(x) = \sum_{j=1}^n \gamma_j \, e^{q_j(x)}$.
These particular densities have become
very popular in several fields (such as speech recognition or image analysis \cite{Yu2015, Santosh2013}) basically because, in many cases, data are multimodal, i.e., the underlying population is already a combination of different sub-populations.
In such applications, it makes intuitive sense to model multimodal data as a mixture of unimodal Gaussian distributions, in order to approximate the original distribution at best.
Furthermore, modelling with finite Gaussian mixtures  maintains many of the theoretical and computational benefits of Gaussian models, making them practical for efficiently modeling very large datasets.
As an example, we recall that the sum of a random variable with Gaussian mixture distribution and an independent Gaussian random variable is distributed again as a Gaussian mixture, with a natural shift in the parameters.
In addition to the properties mentioned above, there is an approximation theorem \cite{Park1991} which proves that finite Gaussian mixtures have the capability to approximate ``arbitrary'' probability distribution functions rather well at least for large enough $N$, where $N$ is the size of the mixture, and a good spread of means and variances, once one allows for coefficients of arbitrary sign.
Summing up, although in the literature it is very common to find finite Gaussian mixtures represented as a linear combination of Gaussian probability density functions with positive weights, where this assumption ensures that the resulting Gaussian mixture does not have any sign-changing zero along the $x$-axes, to take real advantage of this class of distributions it seems necessary to enlarge the study to cover the study of mixtures with coefficients of arbitrary sign.


In that section we also present some numerical results about the investigation of the zeros of a finite Gaussian mixture.
In particular, given a finite Gaussian mixture (through the introduction of the means, variances and mixing coefficients) 
we get
a maximal interval where the sign-changing roots may appear, an interval that is always bounded; further, if these roots exist, we may identify them within a desired accuracy level. 
As a result, we are able to identify those cases in which the mixture is non-negative on the real line.

Finally,  \cref{s:Was} presents an application of our results to the study of Wasserstein-1 distance between probability mixtures; 
in particular, we show that the knowledge of the zeros of the difference between the probability distribution functions, that is obtained  by using GBF sequences, is a necessary tool in computing the distance.
In accordance with our construction, we discuss in separate subsections the case of  EPT functions and Gaussian mixtures.

\section{Presentation of the basic building blocks of the approach}
\label{s:GBF}

For any sufficiently often differentiable function $f: \mathbb{R} \to \mathbb{R}$ we denote by $Df(x)$ the first order derivative in $x$, and generically $D$ the derivative operator;
similarly, $D^jf(x)$ is the $j$-th order derivative, for $j \in \mathbb{N}$ and $D^j$ is the correspondent operator.
\\
For any (real or complex valued) square matrix $A$, $|A|$ is the determinant of $A$.

\subsection{Generalized Budan-Fourier sequence}


Let $I$ 
be a given interval and $f \not\equiv 0$ a  function on $I$. 
Suppose that 
$f$ is 
real analytic 
and hence 
the set of zeros of $f$ is discrete and has finite cardinality on any 
closed and 
bounded sub-interval $[a,b]$ of $I$. 
We are interested in the \emph{sign changing zeros} of $f$, i.e., points $x_0$ such that $f(x_0) = 0$ and for which there exists an open neighbourhood $(x_0-\epsilon,x_0+\epsilon) \subset I, \epsilon>0$ such that for all $y,z$ in this neighbourhood with $y < x_0 < z$ it holds that $f(y) \cdot f(z) < 0$. 
Here we will present a method to determine all sign changing zeros on a given closed and bounded interval $[a,b]$. 
We shall denote by $R(f,[a,b])$ the set of sign-changing zeros of $f$ on the interval $[a,b]$.
Note that for any given point, so also for the points $a$ and for $b$, we can determine whether it is a sign-changing zero of $f$ by inspection. 
Therefore we can focus on determining the sign-changing zero in the open interval $(a,b)$.
An open interval is called a \emph{simple interval} for a function $f$ if it contains at most one sign-changing root of $f$ (possibly of multiplicity greater than 1).
A \emph{simple grid} for $f$ on the interval $[a,b]$ is a finite sequence of points $G = \{x_1 < \dots < x_n\} \subset I$ such that $(a,x_1),(x_n,b)$ and  $(x_i, x_{i+1}), i=1,2,\ldots,n-1$ are all simple intervals for $f$.

\begin{remark}
The set of sign-changing zeros for the first derivative, $R(f',[a,b])$, is
a simple grid for the function $f$.
\end{remark}

\begin{definition}\label{GBFdef}
Let $f$ be a 
real analytic function on the interval $[a,b]$. 
A sequence of functions $\displaystyle \{\psi_i,\ i=0, \dots, n\}$ is called a \emph{Generalized Budan-Fourier (GBF) sequence associated to $f$} 
and a sequence of functions $\displaystyle \{\varrho_i,\ i=1, \dots, n\}$ is called the \emph{associated sequence of pivots} if the following properties are satisfied:
\begin{enumerate}
\item $\psi_0 = f$;
\item $\psi_i$ and $\varrho_i$ are quotients of real analytic functions on $[a,b]$ for each $i=1,\ldots,n;$
\item $\psi_n$ is the zero function;
\item For each $k=1,2,\ldots,n,$ $D \left(\frac{\psi_{k-1}}{\varrho_k}\right)=\frac{\psi_k}{\varrho_k}$ for every value of $x$ for which both sides are defined.
\end{enumerate}
\end{definition}

In the case $f(x)$ is a polynomial of degree $n-1$, then the Budan-Fourier sequence is given by setting, for any $i = 1, \dots, n$, $\varrho_i=1$ and $\psi_i = f^{(i)}$.
Then, as noted above, a simple grid for $\psi_{i-1}$ is given by the set of sign-changing zeros of $\psi_i$. Moreover, the $n$-th derivative of $f(x)$ is the zero function.
However, this same procedure does not work even for simple analytical functions, such as, for instance, $f(x) = \sin(x)+x \cos(x)$.

\begin{remark} 
In the application of this definition the pivots are functions for which the real zeros and poles can be determined independently, such as is the case for instance for  polynomials. 
\end{remark}

\begin{remark}
We shall use repeatedly in the sequel the following observation. Let $\mathcal{G} = \{x_1,\dots,x_n\}$ be a simple grid for a function $f$ on the interval $[a,b]$, given by the sign-changing roots of a function $h$. Then the same grid is a simple grid for the function $\lambda \cdot f$, where $\lambda$ is a continuous, non vanishing function on $(a,b) \setminus \mathcal{G}$.
\end{remark}

\begin{remark}\label{rem:BH5}
Note that if a simple interval $(c,d)$ for a function $f$ is given, then by inspecting the signs of $f$ at $c$ and $d$ (actually the signs of $f(c+\epsilon)$ and $f(d-\epsilon)$ for sufficiently small positive $\epsilon$ to be more precise) one can determine whether $f$ has a sign-changing zero in the open interval and if so one can determine the sign-changing zero with arbitrary precision using a bisection algorithm.
\end{remark}

\begin{remark}\label{rem:BH6}  

Suppose the pivot functions are such that we can independently determine their zeros and poles. 
Then from the definition:
$$D\left(\frac{\psi_{n-1}}{\varrho_n}\right)=\frac{\psi_n}{\varrho_n}$$
and since by assumption $\psi_n \equiv 0$, it follows that $\psi_{n-1}$ is a multiple of $\varrho_n$ and hence $\psi_{n-1}$ inherits the poles and zeros of $\varrho_n$. 
\\
For any $k$, once one knows the poles as well as the sign-changing zeros of $\psi_k$ one can form a grid containing those  poles and sign-changing zeros as well as all the zeros and poles of $\varrho_k$. 
It then follows that the grid is a simple grid for $\psi_{k-1}$. 
The reason is that for any interval in between two consecutive grid points both the sign of $\varrho_k$ and that of $\psi_k$ does not change and the functions $\frac{\psi_{k-1}}{\varrho_k}$ and $\frac{\psi_{k}}{\varrho_k}$ are well-defined as they have no poles. 
Therefore  $\frac{\psi_{k-1}}{\varrho_k}$ is monotonic on such an interval and hence $\psi_{k-1}$ has at most one sign-changing zero (and no poles) on such an interval.

Therefore, using the previous remark one can determine all the  sign-changing zeros of $\psi_{n-1}$ (if they exist) using a bisection procedure and by repeating this procedure for $k= n-2,n-3,\ldots,1$, one can determine all the sign-changing zeros of $f$. 
\end{remark}

Mathematically there is one issue with the procedure outlined in this last remark: we have to assume that the grid points can be computed sufficiently precise to be able to determine the sign of the relevant function at each of those points. 
In case the true signs are either positive or negative, this assumption is correct if the calculations are done with sufficiently good precision. 
However if the sign to be determined is zero, then this assumption is, mathematically speaking, not correct in general. 
For polynomials with rational coefficients one can employ the Euclidean algorithm to determine if two such polynomials have a common zero and this can be used to resolve this issue in the case the $\psi_i$ are polynomials. 
For more general functions only partial results are know to the best of our knowledge. 
See for instance \cite{Richardson1997}.
In our experience so far the issue has not led to problems in practical calculations.


\subsection{Pivot functions and the construction of a GBF sequence}

In this subsection we will describe one strategy to arrive at a sequence of pivots and hence at a GBF sequence for a given function, without dealing with the details at this point. These will be dealt with for a number of classes of functions in the following sections. 
Suppose  that $f$ is a function in the linear span $V$ of $n$ linearly independent real analytic functions $\{h_1, \dots, h_n\}.$ 
Let us assume (and in the sequel we shall discuss when this assumption actually holds) that there exists an $n-$th order linear differential operator $\Phi$ such that
\begin{align*}
    \Phi(h_j) = 0 \qquad j=1, \dots, n.
\end{align*}
Then, by linearity, it holds that $\Phi(f) = 0$ as well.

Assume we can factorize $\Phi$ into first order linear differential operators,
\begin{align*}
    \Phi = \Phi_n \circ \dots \circ \Phi_1,
\end{align*}
where for any $j$ we have $\Phi_j(f) = D f + b_j f$ for some function $b_j$, for $f \in V$.
Then we can associate with this a growing sequence of subspaces of $V$, namely
\begin{align*}
    V_1 &= \mathop{\rm Ker}(\Phi_1),
    \\
    V_2 &= \mathop{\rm Ker}(\Phi_2 \circ \Phi_1),
    \\
    \dots &
    \\
    V_n &= \mathop{\rm Ker}(\Phi_n \circ \dots \circ \Phi_1) = \mathop{\rm Ker}(\Phi).
\end{align*}

\begin{claim}
Under certain regularity conditions there will exist a basis $\{\tilde h_1, \dots, \tilde h_n\}$ of $V = \mathop{\rm span}(h_1, \dots, h_n)$ such that
\begin{align*}
    V_1 &= \mathop{\rm span}(\tilde h_1),
    \qquad
    V_2 = \mathop{\rm span}(\tilde h_1, \tilde h_2),
    \qquad
    \dots 
    \qquad
    V_n = V = \mathop{\rm span}(\tilde h_1, \dots, \tilde h_n).
\end{align*}
\end{claim}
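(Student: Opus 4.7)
The plan is to verify that under suitable regularity (each $b_j$ analytic on $[a,b]$, so that each $\Phi_j=D+b_j$ has a nowhere-vanishing solution $\exp(-\int b_j)$ spanning its one-dimensional kernel), the chain $V_1\subseteq V_2\subseteq\cdots\subseteq V_n$ is strictly increasing with $\dim V_k=k$, and then to build $\{\tilde h_1,\dots,\tilde h_n\}$ by picking one new vector at each step of the flag.

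First I would observe that $\Phi_k\circ\cdots\circ\Phi_1$ is a linear ODE operator of order exactly $k$, with leading coefficient $1$ (hence nowhere-vanishing) and analytic lower-order coefficients. By the classical existence-uniqueness theorem for linear ODEs on $[a,b]$, its kernel is an analytic subspace of dimension precisely $k$. In particular $\dim V_k = k$ for all $k$, and since by assumption $V\subseteq\ker\Phi=V_n$ while $\dim V=n=\dim V_n$, we get $V_n = V$. This is the step that justifies turning the ``abstract'' factorization $\Phi=\Phi_n\circ\cdots\circ\Phi_1$ into a concrete increasing flag inside $V$.

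Next I would construct the basis by induction on $k$. Choose any nonzero $\tilde h_1\in V_1$; this spans $V_1$ since $\dim V_1 = 1$. Suppose $\tilde h_1,\dots,\tilde h_{k-1}$ have been chosen so that they form a basis of $V_{k-1}$. Since $\dim V_k = k > k-1 = \dim V_{k-1}$, the inclusion $V_{k-1}\subsetneq V_k$ is strict; pick any $\tilde h_k\in V_k\setminus V_{k-1}$. Then $\tilde h_1,\dots,\tilde h_k$ are linearly independent (the first $k-1$ span $V_{k-1}$ and $\tilde h_k$ is outside that span), so they form a basis of $V_k$. Iterating to $k=n$ yields the claimed basis of $V$.

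The main obstacle is really the regularity hypothesis, since without it the first-order factors $\Phi_j=D+b_j$ could have singular coefficients $b_j$ on $[a,b]$ (arising, for instance, from division by a quantity that vanishes at some point), in which case the kernels of the composed operators can drop dimension and the flag need not be strict. In all the concrete settings to be treated later (EPT functions and Gaussian mixtures with polynomial factors), the coefficients of the $\Phi_j$ will be quotients of analytic functions whose denominators are the pivots $\varrho_j$; the ``regularity conditions'' amount to working on subintervals where these pivots do not vanish, which exactly matches the definition of the GBF sequence and the use of pivots in Remarks~\ref{rem:BH5} and~\ref{rem:BH6}.
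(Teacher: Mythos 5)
Your argument is correct, and it is worth noting that the paper never actually proves this Claim: it is stated informally ("under certain regularity conditions") and the justification the authors supply is deferred to the next subsection, where the Polya--Ristroph formula is used to \emph{construct} a factorization adapted to the given ordering of $h_1,\dots,h_n$, namely $K_m(f)=W(h_1,\dots,h_m,f)/W(h_1,\dots,h_m)$ with $\ker K_m=\operatorname{span}(h_1,\dots,h_m)$ and pivots $\varrho_k=W_k/W_{k-1}$, so that in their setting one simply has $\tilde h_j=h_j$. Your proof runs in the opposite direction: you take the factorization $\Phi=\Phi_n\circ\dots\circ\Phi_1$ as given, invoke the existence--uniqueness theorem to get $\dim\ker(\Phi_k\circ\dots\circ\Phi_1)=k$ for monic operators with nonsingular coefficients, conclude $V_n=V$ by dimension count, and extract a basis adapted to the resulting strictly increasing flag $V_1\subsetneq\dots\subsetneq V_n=V$. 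Both routes are valid; yours establishes the Claim for an arbitrary regular factorization (and correctly isolates where it can fail, namely when the coefficients $b_j$ acquire poles at zeros of the pivots, so the kernel dimensions can drop), while the paper's route sacrifices that generality in exchange for an explicit identification of the basis ($\tilde h_j=h_j$), of the pivots, and of the precise regularity condition (non-vanishing of the Wronskians $W_1,\dots,W_n$) needed in the applications to EPT functions and Gaussian mixtures. The one small point worth making explicit in your write-up is that $V_k\subseteq V_n=V$ for every $k$, so the vectors $\tilde h_k$ chosen from $V_k\setminus V_{k-1}$ really do lie in $V$; this is immediate from $\Phi_n\circ\dots\circ\Phi_{k+1}$ annihilating whatever $\Phi_k\circ\dots\circ\Phi_1$ already annihilates, but it is the step that turns your flag of kernels into a flag of subspaces of $V$.
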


Notice that it holds
\begin{align*}
    \Phi_j \circ \dots \circ \Phi_1(\tilde h_k) = 0
\end{align*}
provided $j \ge k$.
\\
Now we set
\begin{align*}
    \varrho_1 &= \tilde h_1,
    \qquad
    \varrho_2 = \Phi_1(\tilde h_2),
    \qquad
    \dots 
    \qquad
    \varrho_n = \Phi_{n-1} \circ \dots \circ \Phi_1(\tilde h_n).
\end{align*}
We claim under certain regularity conditions  $\{\varrho_1, \dots, \varrho_n\}$ can play the role of the sequence of \emph{pivot} functions in  \cref{GBFdef}.


Now, since $f \in V$, there exists a representation of $f$ in terms of the basis $\{\tilde h_1, \dots, \tilde h_n\}$
\begin{align*}
    f = \sum_{j=1}^n f_j \tilde h_j, ~f_j \in \mathbb{R},~j=1,2,\ldots,n.
\end{align*}
Then 
\begin{equation*}
\begin{aligned}
    \Phi_1(f) =& \sum_{j=2}^n f_j \Phi_1( \tilde h_j), \\ 
    \Phi_2 \circ \Phi_1(f) =& \sum_{j=3}^n f_j \Phi_2 \circ \Phi_1( \tilde h_j),\\ 
    \vdots& \\
    \Phi_{n-2} \circ ...\Phi_2 \circ \Phi_1(f) =& \sum_{j=n-1}^n f_j \Phi_{n-2} \circ...\circ \Phi_1( \tilde h_j) \\
    \Phi_{n-1} \circ ...\circ \Phi_1(f)=&f_n \Phi_{n-1} \circ ...\circ \Phi_1(f) \\
\\
\Phi_{n} \circ ...\circ \Phi_1(f)=&0.
  \end{aligned}
\end{equation*}
We notice that 
\begin{equation}
    \Phi_k(f) = \varrho_k \, D \left( \frac{f}{\varrho_k}\right), \quad k = 1, \dots, n
\end{equation}
which implies, in particular,
\begin{equation}
    \Phi_k \circ \dots \circ \Phi_1(f) = \varrho_k D \left( \frac{\varrho_{k-1}}{\varrho_k} D \left( \frac{\varrho_{k-2}}{\varrho_{k-1}} \dots \left(\frac{\varrho_1}{\varrho_2} D \left(\frac{f}{\varrho_1} \right) \right) \right) \right), ~k=1,2,\ldots,n
\end{equation}
and that, if $\varrho_1, \varrho_2,\ldots, \varrho_n$ can indeed act as the sequence of pivots, the corresponding GBF sequence for $f$ is given by 
\begin{equation}
    \psi_k=\Phi_k(f), ~k=1,\ldots,n.
\end{equation}

In next subsection, we shall use the \emph{Polya-Ristroph formula} to obtain such a  sequence of operators $\{\Phi_j,\ j = 1, \dots, n\}$ explicitly and show that we can actually do that in such a way that $h_j=\tilde{h}_j, ~j=1,\ldots, n.$

\subsection{A formula by Polya and Ristroph}

We will make use of a classical formula of G.\ Polya  \cite{Polya1922} and results from R.\ Ristroph \cite{Ristroph1972} which state the following.

Let $\{h_i\}$ be a finite sequence of real, linearly independent real analytic functions on a given open interval $I$. Let, for $m \in \mathbbm{N}$,
\begin{align*}
W_m(x) = W(h_1, \dots, h_m)(x) = \begin{vmatrix}
h_1 & h_2 & \dots & h_m \\ Dh_1 & Dh_2 & \dots & Dh_m \\ \dots & & & \dots
\\ D^{m-1}h_1 & D^{m-1}h_2 & \dots & D^{m-1}h_m \end{vmatrix}(x)
\end{align*}
be the Wronskian defined on $I$. We set $W_0(x) = 1$ and we notice that $W_1(h_1) = h_1$ and $W_2(h_1,h_2) = h_1 (Dh_2)  - (Dh_1) h_2$. 
Denote, for any real analytic function $f$ 
\begin{align}\label{eq:Km1}
K_m(f) = \frac{W(h_1, \dots, h_m, f)}{W(h_1, \dots, h_m)}.
\end{align}
Then $f \mapsto K_m(f)$ is the {\em unique} monic\footnote{i.e., with leading coefficient equal to 1}\ $m$-th order linear differential operator 
for which $\{h_i,\ i=1, \dots m\}$ is a fundamental set.

\smallskip

\begin{remark}
We introduce the following expansion for the differential operator $K_m$
\begin{align}\label{eq:Km1-1}
K_m(f) = \sum_{j=0}^m a_{m,j} D^j f, \qquad a_{m,m}=1
\end{align}
(the dependence on $x$ is suppressed for notational simplicity).

We remark that for $0 \le j \le m-1$,  $a_{m,j}$ is 
the quotient of two real analytic functions, and its poles are therefore  zeros of the denominator and hence of $W_m(x)$.
Since further
(cf.\ Polya's paper \cite{Polya1922})
\begin{align*}
D W_m(x) 
= - a_{m,m-1}(x) W_m(x),
\end{align*}
it follows that whenever $W_m(x)$ does not vanish inside $I$, then the representation
\begin{align}\label{eq:Wm-3}
W_m(x) = \exp\left( -\int_{x_0}^x a_{m,m-1}(z) \, {\rm d}z \right) W_m(x_0)
\end{align}
holds for any $x, x_0 \in I$.
\end{remark}

\smallskip

\begin{remark}
The following expression of the
differential operator $K_m(f)$ is proved in \cite{Polya1922} by a direct computation involving the derivative of the Wronskian, and in \cite{Ristroph1972} via a direct argument based on induction:
\begin{equation}\label{eq:Km2-1}
K_m(f) 
= { \tfrac{W_m}{W_{m-1}} D \left( \tfrac{W^2_{m-1}}{W_m W_{m-2}} D \left( \tfrac{W^2_{m-2}}{W_{m-1} W_{m-3}} \dots D \left( \tfrac{W_2^2}{W_3 W_1}
D \left( \tfrac{W_1^2}{W_2} D \left(\tfrac{f}{W_1} \right) \right) \dots \right) \right. \right.}
\end{equation}
In particular, for $m=2$ we obtain
\begin{equation}\label{eq:Km2-2}
K_2(f) 
= \tfrac{W_2}{W_1}
D \left( \tfrac{W_1^2}{W_2} D \left(\tfrac{f}{W_1} \right) \right).
\end{equation}
Note that \eqref{eq:Km2-1} and \eqref{eq:Km2-2} hold for all $x$ for which none of the of the denominators involved are zero.
\end{remark}
From equation \eqref{eq:Km2-1} it follows that in case the Wronskians $W_j,~j=1,\ldots,n$ are functions for which the zeros can be obtained independently (e.g. if they are the (real) zeros of polynomials) then the quotients $\varrho_k := \frac{W_k}{W_{k-1}},~k=1,2,\ldots,n,$ where $W_0\equiv 1$ by convention, can play the role of the sequence of pivots and then $K_m(f),~m=1,2,\ldots,n$ is the associated GBF sequence in case $f$ lies in the linear span of $\{h_1,\ldots,h_n\}.$

In the following four sections we will use the general framework presented here to develop pivot sequences and corresponding GBF sequences for a number of classes of functions and explain how these can be used to obtain all the sign-changing zeros of a given function on a given closed interval $[a,b].$ It will also be explained how for some cases it is possible to extend the results to obtain all sign-changing zeros on the real line.

\section{Analysis of the class of Exponential-Polynomial-Trigonometric functions}
\label{s:EPT}

Consider a function $f:[0,\infty) \to \mathbbm{R}$ of the form \eqref{e1}. 
We say that $f$ is a real \emph{EPT (Exponential-Polynomial-Trigonometric) function} if
\begin{align}\label{e:ept}
f(x) = \Re \left( \sum_{k=1}^m p_k(x) e^{\mu_k x} \right), \qquad x \ge 0,
\end{align}
where $m \ge 1$, $\mu_k \in \mathbbm{C}$ and $p_k(x)$ are complex-valued polynomials, for $k=1,\dots, m$.
We shall denote by $\mathcal{B}$  the set of real EPT  functions.
Let us state a few alternative characterizations, that may be useful in the sequel, see for instance \cite{Sexton2013}.

\begin{lemma}
$f \in \mathcal{B}$ if and only if any of the following equivalent characterizations hold:
\begin{enumerate}
    \item there exist a real $\tilde{n} \times \tilde{n}$ matrix $A$, a real $\tilde{n}$-dimensional column vector $b$ and a real $\tilde{n}$-dimensional row vector $c$ such that $f(x) = c e^{A x}b$. 
    \\
    If $n$ is the minimal possible choice for $\tilde{n},$ given $f \in \mathcal{B}$, then we say that $(A,b,c)$ is a minimal realization of $f$ and $n$ is the \emph{order} (or \emph{McMillan degree}) of the function $f$.
    \item the Laplace transform of $f$ is a strictly proper rational function.
    \item $f$ is the solution of a linear differential equation with constant coefficients (Euler-D'Alembert class).
\end{enumerate}
\end{lemma}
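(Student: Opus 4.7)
The plan is to establish the equivalences as a cycle $f \in \mathcal{B} \Rightarrow (1) \Rightarrow (2) \Rightarrow f \in \mathcal{B}$, and then separately show $(2) \Leftrightarrow (3)$. All three implications are classical results of Laplace transform theory and linear systems realization, but care is needed throughout to maintain the reality of the matrices and vectors in (1) and of the coefficients in (2).

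First I would show $f \in \mathcal{B} \Rightarrow (1)$ by constructing a realization block by block. For a single complex term $p(x)e^{\mu x}$ with $\deg p = d$, the functions $\{x^j e^{\mu x}\}_{j=0}^{d}$ are annihilated by the operator $(D-\mu)^{d+1}$, so a Jordan-block triple of size $d+1$ gives complex matrices $(A_k,b_k,c_k)$ reproducing that term. Pairing each complex term with its conjugate (which is forced by $f$ being real) and passing to the real Jordan form, where a block at $\mu = \alpha + i\beta$ is replaced by $2\times 2$ blocks $\left(\begin{smallmatrix}\alpha & \beta \\ -\beta & \alpha\end{smallmatrix}\right)$, produces real data. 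Assembling these block-diagonally yields a global real triple $(A,b,c)$ with $c e^{Ax} b = f(x)$. Minimality of $\tilde n$ then follows by taking a Kalman decomposition to remove uncontrollable and unobservable parts.

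For $(1) \Rightarrow (2)$, a direct calculation gives $\mathcal{L}(c e^{Ax} b)(s) = c(sI-A)^{-1}b = \frac{c\,\mathrm{adj}(sI-A)b}{\det(sI-A)}$, whose numerator is a polynomial of degree at most $\tilde n - 1$ and denominator of degree $\tilde n$, hence strictly proper rational with real coefficients. For $(2) \Rightarrow f \in \mathcal{B}$, partial-fraction decomposition of a strictly proper rational transform over $\mathbb{C}$ produces a finite sum of terms $\frac{c_{k,j}}{(s-\mu_k)^{j+1}}$; inverting term by term yields $f(x) = \sum_{k,j} c_{k,j}\frac{x^j}{j!} e^{\mu_k x}$. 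Since $f$ is real, the poles $\mu_k$ and coefficients $c_{k,j}$ come in conjugate pairs, and grouping each such pair as $2\,\Re\bigl(\cdot\bigr)$ yields exactly the form \eqref{e:ept}.

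Finally, for $(2) \Leftrightarrow (3)$: if $f$ satisfies $P(D)f = 0$ on $[0,\infty)$ for some polynomial $P$ of degree $\tilde n$, applying the Laplace transform gives $P(s)\hat{f}(s) = Q(s)$, where $Q$ is determined by the initial data $f(0),\dots,f^{(\tilde n - 1)}(0)$ and has degree at most $\tilde n - 1$, so $\hat{f} = Q/P$ is strictly proper rational. Conversely, given strictly proper rational $\hat f = Q/P$ with $\deg Q < \deg P = \tilde n$, the function $f$ solves $P(D)f = 0$ with initial conditions recoverable from $Q$ via the same identity. The main delicate point throughout is the systematic handling of the passage between complex and real Jordan structure; once this dictionary is in place, all three equivalences are routine consequences of classical realization theory and justify the reference to \cite{Sexton2013}.
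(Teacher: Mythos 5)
Your argument is correct, and it is the standard realization-theoretic proof. The paper itself does not prove this lemma at all --- it is stated as a collection of classical facts with a pointer to \cite{Sexton2013} --- so there is no internal proof to compare against; your cycle $f\in\mathcal{B}\Rightarrow(1)\Rightarrow(2)\Rightarrow f\in\mathcal{B}$ together with $(2)\Leftrightarrow(3)$ is exactly the route one would expect the cited reference to take. Two small points worth tightening if you write this out in full: the identity $\mathcal{L}(ce^{Ax}b)(s)=c(sI-A)^{-1}b$ holds a priori only for $\Re(s)>\max\Re\,\sigma(A)$, and one should say explicitly that the Laplace transform is being identified with its rational continuation; and in the step $(2)\Rightarrow f\in\mathcal{B}$ the conjugate pairing of poles and residues follows from the fact that $\hat f$ takes real values on the real axis (because $f$ is real), hence is a quotient of real polynomials --- this is the hypothesis that makes the regrouping into $2\,\Re(\cdot)$, i.e.\ the form \eqref{e:ept}, legitimate.
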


This last result amounts to saying that there exists a linear differential operator $p(D)$ such that $p(D)f = 0$. 
The associated polynomial $p(x)$ 
has degree ${\mathop {\rm deg}}(p) \ge n$
and if we search for such an operator $p(D)$ having minimal degree we obtain ${\mathop {\rm deg}}(p) = n$ ($n$ is the \emph{McMillan degree}).
If we search for a minimal degree, monic polynomial we have uniqueness of $p(D)$ and the representation
\begin{align*}
p(x) = 
\left| xI - A \right|,
\end{align*}
where $A$ is the matrix from a minimal realization $(A,b,c)$ of $f$.

Since
$p(x)$ is a real polynomial of degree $n$, it can be factorized into linear and quadratic real factors
\begin{align}\label{eq:factor-p}
p(x) = p_1(x) p_2(x) \dots p_{a+b}(x),
\end{align}
where $p_1, \dots, p_a$ are linear factors $p_i = x-\lambda_i,$ and $p_{a+1}, \dots, p_{a+b}$ are irreducible quadratic factors
$p_i(x) = x^2 - 2 \theta_i + \rho_i^2$ (with $|\theta_i| < \rho_i$ \footnotemark), and $a + 2b = n$.
\footnotetext{The roots of $p_i$ are $\theta_i \pm i \sqrt{\rho_i^2 - \theta_i^2}$, which can be written as
$\rho_i e^{\pm i \arccos(\theta_i/\rho_i)}$}

\subsection{GBF sequence for EPT functions}

Now, we construct a GBF sequence associated
to an EPT function with minimal realization $f(x) = c e^{A x}b$. We know that $p(D)f(x) = 0$ for
$p(x) = \left| xI - A \right|$.
We start by setting $\psi_0 = f$.

Let $p_1(x)$ be the first factor of $p(x)$, as constructed in \eqref{eq:factor-p}, and assume that $p_1(x) = x - \lambda_1$ is a linear factor.
We set $h_1(x) = e^{\lambda_1 x}$, so that $p_1(D)h_1 = 0$.
Define
\begin{align*}
\psi_1(x) = p_1(D) \psi_0(x) = h_1 D \left( \frac{\psi_0}{h_1} \right).
\end{align*}
Then, since $h_1(x) > 0$ definitely, it follows that on any interval on which $\psi_1$ does not change sign, then $h_1^{-1} \psi_0$ is {\em monotonic},
hence the interval will be a {\em simple interval} for $\psi_0$. 
Therefore, the set $R(\psi_1,[a,b])$ of sign-changing zeros for $\psi_1$ defines a simple grid for $\psi_0$.
We proceed recursively for any $i=1, \dots, a$ ($a$ being the number of linear factors in $p(x)$), thus constructing a sequence
$\{\psi_0, \psi_1,\dots, \psi_a\}$, where $\psi_{i} = p_{i}(D) \psi_{i-1} = \left( \prod_{j=1}^i p_j(D) \right) \psi_0$. 

Now let us consider $p_{a+1}(x)$, that is, the first irreducible quadratic polynomial in the factorization of $p(x)$. It seems natural to associate to this term {\em two} elements of the GBF sequence.
In order to explain the procedure,
assume that $p_{a+1}(x) = x^2 - 2 \theta_{a+1} x + \rho_{a+1}^2$ with $\theta_{a+1}^2 < \rho_{a+1}^2$. 
Let further $\psi_a(x) =  \left( \prod_{j=1}^a p_j(D) \right) \psi_0$ be the last element of the GBF sequence constructed before.

Consider the fundamental set 
\begin{align*}
h_1(x) = e^{x \theta_{a+1} } \sin\left( x \sqrt{\rho_{a+1}^2 - \theta_{a+1}^2} \ \right),
\qquad
h_2(x) = e^{x \theta_{a+1} } \cos\left( x \sqrt{\rho_{a+1}^2 - \theta_{a+1}^2} \right),
\end{align*}
associated with the operator $p_{a+1}(D)$.

Recall, from formula \eqref{eq:Km2-2} that
\begin{align*}
p_{a+1}(D) \psi_a(x) = D^2 \psi_a(x) - 2 \theta_{a+1} D \psi_a(x) + \rho_{a+1}^2 \psi_a(x) = 
\frac{W_2}{W_1}D \left(\frac{W_1^2}{W_2} D \left(\frac{\psi_a}{W_1} \right) \right)
\end{align*}
with formally $W_1(x) = h_1(x)$, $W_2(x) = {\begin{vmatrix} h_1(x) & h_2(x)  \\ h_1'(x) & h_2'(x) \end{vmatrix}}$. 

We can compute $W_2$ by taking into account formula \eqref{eq:Wm-3}: since $a_{2,1} = -2 \theta_{a+1}$, we obtain
$W_2(x) = e^{ 2(x-x_0) \theta_{a+1} } W_2(x_0)=c e^{ 2 x \theta_{a+1} }$ for some non-zero constant $c$. 

We define
\begin{align*}
\psi_{a+1} =& c\frac{W_1^2}{W_2} D \left(\frac{\psi_a}{W_1} \right) = \sin^2\left(x \sqrt{\rho_{a+1}^2 - \theta_{a+1}^2} \right) \, D \left(\frac{\psi_a}{e^{x \theta_{a+1} } \sin\left( x \sqrt{\rho_{a+1}^2 - \theta_{a+1}^2} \right)} \right), 
\\ 
\psi_{a+2} =& \frac{W_2}{c W_1}D \psi_{a+1} = \frac{1}{e^{-x\theta_{a+1} } \sin\left( x \sqrt{\rho_{a+1}^2 - \theta_{a+1}^2} \right)} D \psi_{a+1}.
\end{align*}
Note that from
\begin{align*}
\frac{\psi_{a+1}(x)}{\sin^2\left( x \sqrt{\rho_{a+1}^2- \theta_{a+1}^2} \right)} = D \left(\frac{\psi_{a}(x)}{e^{x \theta_{a+1} } \sin\left(x \sqrt{\rho_{a+1}^2- \theta_{a+1}^2} \right)} \right)
\end{align*}
it follows that if we join the set of zeros of the function $\sin\left( x \sqrt{\rho_{a+1}^2- \theta_{a+1}^2} \right)$ with the set of sign-changing zeros of $\psi_{a+1}$ we will get a simple grid for $\psi_a$,
from which the sign-changing zeros of $\psi_a$ can be deduced by a bracketing procedure.

Moreover, again since
\begin{align*}
\psi_{a+2} = \frac{W_2}{cW_1}D \psi_{a+1},
\end{align*}
it follows that a simple grid for $\psi_{a+1}$ is given by the sign-changing zeros of $\psi_{a+2}$ and the zeros of the function $\sin\left( x \sqrt{\rho_{a+1}^2- \theta_{a+1}^2} \right)$.
Finally, we remark that
\begin{align*}
\psi_{a+2}(x) = p_{a+1}(D)\psi_a(x).
\end{align*}
Note that this implies that $\psi_{a+2}$ has no poles as  $\psi_{a}$ is real analytic.

Continuing on this way, we obtain a sequence of functions $\psi_{a+1}, \dots, \psi_{n}(x)$. Notice that $n = a + 2b$ and 
\begin{align*}
\psi_n(x) = \prod_{j=1}^b p_{a+j}(D) \prod_{i=1}^a p_i(D) f = p(D) f = 0
\end{align*}
and $\displaystyle \{\psi_j,\ j=0, \dots, n\}$ forms a GBF sequence for $f$.

\section{Analysis of Polynomial-Gaussian mixtures}
\label{s:GGM}

In this section, we consider functions of the form
\begin{align}
f(x) = \Re \left[ \sum_{j=1}^n p_j(x) e^{q_j(x)}\right],
\end{align}
where we assume that
\begin{equation}
\text{$q_j$ are real second order polynomials.}
\end{equation}
In this case, with no loss of generality, we can also assume that the polynomials $p_j$ are real valued as well and 
we denote $h_j(x) = p_j(x) e^{q_j(x)}$ for $j=1, \dots, n$ and also
that the functions $\{h_j(x),\ j=1, \dots, n\}$ are linearly independent.
We write
\begin{align}
\label{eq:bgm}
f(x) =  \sum_{j=1}^n p_j(x) e^{q_j(x)} = \sum_{j=1}^n h_j(x).
\end{align}
We will call functions of this type Polynomial-Gaussian mixtures (PGMs). Note that they can also be viewed as mixtures of Hermite functions.

\subsection{GBF sequence for PGMs}

For arbitrary $m \le n$, let us consider the Wronskian determinant $W_m$.
Notice that, given a function $h_j$,  
its derivatives of all orders are of the same form:
\begin{align*}
D^k h_j(x) = p_{j,k}(x) e^{q_j(x)}
\end{align*}
where $p_{j,k}(x)$ is a polynomial whose coefficients can be explicitly computed. Since
\begin{align*}
W(h_1, \dots, h_m) = \begin{vmatrix} h_1 & h_2 & \dots & h_m \\ Dh_1 & Dh_2 & \dots & Dh_m \\ \dots & & & \dots
\\ D^{m-1}h_1 & D^{m-1}h_2 & \dots & D^{m-1}h_m \end{vmatrix}
\end{align*}
it follows that the Wronskian $W_m$ has the form
\begin{align*}
W_m(x) = P_m(x) e^{\sum_{j=1}^m q_j(x)}
\end{align*}
where $P_m$ is a polynomial and $P_m \not\equiv 0$. 
As we can determine all the zeros of a polynomial within the interval $[a,b]$ (e.g., using the Budan-Fourier method, or the Sturm chain method, etc.), the Polya-Ristroph formula gives us a GBF sequence:
\begin{equation}\label{eq:gbf-seq1}
\begin{aligned}
\psi_0 &= f = \sum_{j=1}^n p_j(x) e^{q_j(x)}
\qquad
&\psi_1 &= W_1 D\left( \frac{\psi_0}{W_1} \right)
\\
\psi_2 &= \frac{W_2}{W_1} D \left( \frac{W_1}{W_2} \psi_1 \right)
\qquad
&\dots
\\
& \dots
\qquad
&\psi_n&= \frac{W_{n}}{W_{n-1}} D \left( \frac{W_{n-1}}{W_{n}} \psi_{n-1} \right) 
\end{aligned}
\end{equation}
Notice that $\frac{W_j}{W_{j+1}} = \frac{P_j}{P_{j+1}} e^{q_{j+1}}$ and its inverse $\frac{W_{j+1}}{W_j}$ are rational functions times an exponential, of which we can determine all the zeros and poles.
Further,
$\psi_{n} \equiv 0$
as $f$ is a solution to the linear differential equation
\begin{align*}
\frac{W(h_1,\dots, h_n,f)}{W(h_1,\dots, h_n)} = 0.
\end{align*}
So indeed $\psi_0 = f, \psi_1, \dots, \psi_n \equiv 0$ is a GBF sequence for $f$. 

\subsection{An alternate approach}

Even if the above method is mathematically elegant, its application requires  the computation of high order derivatives and determinants of large matrices. 
Therefore, the method could seem quite elaborate to understand at a first glance. 
In this regard, we propose an alternate, more intuitive approach for the construction of the GBF sequence with the aim of providing the reader with a clearer structural framework of the problem. 
As shown below, the approach is based on an iterative procedure that requires at each step the computation of only first order derivatives rather than Wronskians of arbitrary order.

However, it is to be noted that the new method, even if theoretically more feasible, requires multiple computations of polynomial quotients which makes its implementation even more challenging to be handled. 
In fact, the coefficients of these polynomials stem from iterative computations, therefore common factors detection needs to be addressed properly. 
One way could be to compute symbolical expressions of the polynomials, take out common factors and store the results in a database which could then be adapted to different implementations by just substituting symbolic values of the coefficients with numerical ones. 
However, this approach was found to be computationally unfeasible for larger values of $n$ due to symbolic polynomials division limitations. 
Therefore, deeming the approach more robust and self-contained even if more elaborate, we decided to implement the computation in \cref{sec:ggm} through the GBF sequence with the Wronskians approach. 
That said, the alternate approach is theoretically valid and offers a clearer view of the mechanism undergoing the building process of the GBF sequence as we shall show in the example provided in \cref{sec:ex-pgm}.

Recall the representation in \eqref{eq:bgm}
\begin{align*}
f(x) =  \sum_{j=1}^n p_j(x) e^{q_j(x)} = \sum_{j=1}^n h_j(x).
\end{align*}
We write $\psi_1$ in the following form 
\begin{align*}
\psi_1(x) = h_1(x) D \left( \frac{f(x)}{h_1(x)} \right) = h_1(x) \, \sum_{j=2}^n D \left( \frac{h_j(x)}{h_1(x)} \right) 
= \sum_{j=2}^n \underbrace{\left( h_j'(x) - h_j(x) \frac{h_1'(x)}{h_1(x)} \right)}_{h_{1;j}}
\end{align*}
A simple grid for $\psi_0 = f$ is given by the zeros and the poles of $\psi_1$ and those of $h_1$ (i.e., those of $p_1$).
It shall be noticed that the functions $h_{1;j}(x)$ have a peculiar form
\begin{align*}
    h_{1;j}(x) = r_{1;j}(x) e^{q_j(x)},
\end{align*}
where $r_{1;j}(x)$ is a rational function whose poles correspond to (a subset of) the zeros of $h_1$.


Since we are interested in the construction of a GBF sequence, we 
write
\begin{align*}
\psi_2(x) = h_{1;2}(x) D \left( \frac{\psi_1(x)}{h_{1;2}(x)} \right).
\end{align*}
A direct computation shows that this function coincides with the second term in the GBF sequence defined before.
Therefore, just as before, a simple grid for $\psi_1$ is given by the zeros and the poles of $\psi_2$ and those of $r_{1;2}$ (the rational factor of $h_{1;2}$).
However, this new formulation shows that while $\psi_1$ is defined by the sum of $n-1$ terms, now $\psi_2$ is given by the sum of $n-2$ terms, that is, with a consistent notation:
\begin{align*}
\psi_2(x) =  \sum_{j=3}^n \left( h_{1;j}'(x) - h_{1;j}(x) \frac{h_{1;2}'(x)}{h_{1;2}(x)} \right)
= \sum_{j=3}^n h_{2;j}(x).
\end{align*}
therefore, the procedure finishes in $n$ steps, like before.
We can identify the pivot functions as $\varrho_1 = h_1$ and $\varrho_k = h_{k-1;k}$, $k = 2, \dots, n$.
A worked-out example of this procedure is given in  \cref{sec:ex-pgm}.


\section{Analysis of the class of  Gaussian Mixtures with arbitrary coefficients}
\label{sec:ggm}

In this section we use the Polya-Ristroph formula to construct a GBF sequence 
for finite Gaussian mixtures having coefficients of arbitrary sign
\begin{equation}
\label{eq:ggm1}
f(x) = \sum_{k=1}^{n} \gamma_k h_k(x) \qquad   x \in \mathbb{R}
\end{equation}
where $n$ is given, and
\begin{itemize}
	\item[i)] $\displaystyle h_k(x) =  e^{-q_k(x)}$ where $q_k(x)$ is a second degree, non positive polynomial $\displaystyle q_k(x) = -\frac{1}{2 \sigma_k^2} (x-\mu_k)^2 $ with $\mu_k \in \mathbb{R}, \, \sigma_k^2 > 0$ for all $1 \leq k \leq n $;
	\item[ii)] for all $1 \leq k \leq n $, $\gamma_k \, \in \mathbb{R}$ satisfies  $\displaystyle \sum_{k=1}^n  \sigma_k \gamma_k = \frac{1}{\sqrt{2 \pi}}$.
\end{itemize}
By definition, a function of the form \eqref{eq:ggm1} is a probability density function provided that
\begin{itemize}
	\item[iii)] $ f(x) \geq 0 \quad \forall \; x  \in \mathbb{R}$.
\end{itemize}

We shall refer to functions of the form \eqref{eq:ggm1} as \emph{finite Gaussian Mixtures }  (see the discussion in  \cref{sec:intro} for a introduction to their properties and main applications in the literature).

\begin{remark}
\label{rem:hermite}
Recall that for the standard Gaussian density we define the Hermite polynomials as
\begin{align*}
H_0(x) = 1, \qquad H_n(x) = (-1)^n e^{x^2/2} D^n e^{-x^2/2}, \qquad n = 1, 2, \dots.
\end{align*}
Then, for $q(x) = \frac1{2\sigma^2}(x-\mu)^2$ it holds
\begin{align*}
D^n e^{-q(x)} = 
\frac{(-1)^n}{\sigma^n} e^{-q(x)} H_n\left( \frac{x-\mu}{\sigma}\right).
\end{align*}
Let us denote
\begin{align*}
P_n(x) = \frac{(-1)^n}{\sigma^n}  H_n\left( \frac{x-\mu}{\sigma}\right).
\end{align*}
Then we know that $P_n(x)$ is a polynomial of degree $n$ that satisfies the recurrence relation
\begin{align*}
P_{n+1}(x) = - \frac{x-\mu}{\sigma^2} P_n(x) +  D P_n(x). 
\end{align*}
\end{remark}

\vskip 1\baselineskip

With no loss of generality, we assume that $h_i \neq h_j \; \, \forall \, i \neq j $ since any mixture can be traced back to this case. 
Notice that for any given $h_k(x)$ its derivatives of all orders are of the form 
\begin{align*}
    D^m h_k(x) = P_{m,k}(x) h_k(x)
\end{align*}
where $P_{m,k} $ is the generalized $m$-th degree Hermite polynomial 
with coefficients $\mu_k$ and $\sigma^2_k$:
\begin{align*}
P_{m,k}(x) = \frac{(-1)^m}{\sigma_k^m}  H_m\left( \frac{x-\mu_k}{\sigma_k}\right), \qquad m = 1, 2, \dots.
\end{align*}
Since they play an important r\^ole in the sequel, we introduce the linear functions
\begin{equation}
\label{eq:phi}
\phi_k(x) = D q_k(x) = P_{1,k}(x) = \frac{\mu_k - x }{\sigma^2_k}, \qquad k=1, \dots, n,
\end{equation}
so for instance the recurrence relation reads
\begin{align*}
P_{m+1,k} = \left( \phi_k + D \right) P_{m,k}(x), \qquad m = 0, 1, \dots, k = 1, \dots, n.
\end{align*}
We can further express the $m$-th order Wronskian for the sequence $\{h_j\}_{j=1}^{n}$ as
\begin{align*}
W(h_1, \dots, h_m) = \left( \prod_{j=1}^m e^{- q_j } \right) \, \begin{vmatrix} 1 & 1 & \dots & 1 \\ P_{1,1} & P_{1,2} & \dots & P_{1,m} \\ \dots & & & \dots
\\ P_{m-1,1} & P_{m-1,2} & \dots & P_{m-1,m} \end{vmatrix}.
\end{align*}

For any $j \ge 1$ and for any sequence of increasing integers $i_1, \dots, i_j$, we introduce the polynomial
\begin{align}
\label{eq:Q}
Q^{j-1}_{i_1, \dots, i_j} = \begin{vmatrix} 1 & 1 & \dots & 1 \\ P_{1,i_1} & P_{1,i_2} & \dots & P_{1,i_j} \\ \dots & & & \dots
\\ P_{j-1, i_1} & P_{j-1, i_2} & \dots & P_{j-1, i_j} \end{vmatrix}
\end{align}
(and, in particular, $Q^0_k = 1$ for any $k \ge 1$)
so that
\begin{align}
\label{eq:determ}
W(h_1, \dots, h_m) = \left( \prod_{j=1}^m e^{- q_j } \right) Q^{m-1}_{1, \dots, m}.
\end{align}

\begin{theorem}
\label{th:tp1}
The following relations hold:
\begin{equation}
\label{eq:Q1}
Q^1_{j,k} = \phi_k - \phi_j, \qquad 1 \le j \le k \le n
\end{equation}
and
\begin{equation}
\label{eq:Q2}
Q^{j}_{1, \dots, j, k} 
Q^{j-2}_{1, \dots, j-1}
= 
Q^{j-1}_{1, \dots, j} \, \left[  \phi_k  - \phi_j \right] Q^{j-1}_{1, \dots, j-1, k}
+
Q^{j-1}_{1, \dots, j} \, D_x  Q^{j-1}_{1, \dots, j-1, k}
- 
Q^{j-1}_{1, \dots, j-1, k}  \, D_x Q^{j-1}_{1, \dots, j} 
\end{equation}
for $2 \le j < k \le n$.
\end{theorem}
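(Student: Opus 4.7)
Equation~\eqref{eq:Q1} follows at once from expanding the $2\times 2$ determinant $Q^{1}_{j,k}=\bigl|\begin{smallmatrix} 1 & 1 \\ P_{1,j} & P_{1,k}\end{smallmatrix}\bigr|$ together with $P_{1,i}=\phi_{i}$.

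For~\eqref{eq:Q2} my plan is to apply the Desnanot--Jacobi (Sylvester) identity for the deletion of two rows and two columns to the $(j+1)\times(j+1)$ matrix $N$ whose $(m,i)$-entry is $P_{m,i}$ with $0\le m\le j$ and $i\in\{1,\dots,j,k\}$. Removing the last two rows and the last two positional columns, the identity becomes
\[
 Q^{j}_{1,\dots,j,k}\cdot Q^{j-2}_{1,\dots,j-1}=R_{1}R_{4}-R_{2}R_{3},
\]
where $R_{1}=Q^{j-1}_{1,\dots,j}$ and $R_{2}=Q^{j-1}_{1,\dots,j-1,k}$ are the ``standard'' minors obtained by deleting row $j$, while $R_{3}$ and $R_{4}$ are the ``shifted'' minors over the same column sets $\{1,\dots,j\}$ and $\{1,\dots,j-1,k\}$ but with row $j-1$ deleted instead. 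The target then reduces to proving the algebraic identity $R_{1}R_{4}-R_{2}R_{3}=R_{1}DR_{2}-R_{2}DR_{1}+(\phi_{k}-\phi_{j})R_{1}R_{2}$.

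The next step is to express $R_{3}$ and $R_{4}$ through $DR_{1}$, $DR_{2}$ and a correction term. Writing $r_{m}=(P_{m,1},\dots,P_{m,j})$ for the rows defining $R_{1}$ and using the recurrence $P_{m+1,i}=\phi_{i}P_{m,i}+DP_{m,i}$, one has $Dr_{m}=r_{m+1}-\phi\!\cdot\! r_{m}$, where $\phi\!\cdot\! r_{m}:=(\phi_{i}P_{m,i})_{i}$ denotes the componentwise product. Differentiating $R_{1}$ row by row and observing that for $m<j-1$ the contribution $\det(\dots,r_{m+1},\dots,r_{j-1})$ vanishes by a repeated-row argument, one arrives at $DR_{1}=R_{3}-S_{1}$ with
\[
 S_{1}:=\sum_{m=1}^{j-1}\det(r_{0},\dots,\phi\!\cdot\! r_{m},\dots,r_{j-1}),
\]
and an identical argument gives $DR_{2}=R_{4}-S_{2}$. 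Substituting into the Desnanot--Jacobi relation, what remains is the purely algebraic identity $R_{1}S_{2}-R_{2}S_{1}=(\phi_{k}-\phi_{j})R_{1}R_{2}$.

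The crux, where I expect the main obstacle, is the closed-form evaluation of $S_{1}$: a priori it is a sum of $j-1$ perturbed determinants with no evident pattern. My plan is to expand each summand along row $m$ by multilinearity, swap the two sums, and recognise the inner sum over $m$ as the cofactor expansion of $R_{1}$ along column $i$ with only the $m=0$ term excised; this yields $S_{1}=\sum_{i=1}^{j}\phi_{i}(R_{1}-C_{0,i})$, where $C_{0,i}$ denotes the $(0,i)$-cofactor of the matrix underlying $R_{1}$. The remaining tail $\sum_{i}\phi_{i}C_{0,i}$ is, again by multilinearity, the determinant of that matrix with its top row $(1,\dots,1)$ replaced by $(\phi_{1},\dots,\phi_{j})$; but this replacement duplicates the existing row $r_{1}=\phi$ and hence the determinant vanishes. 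Therefore $S_{1}=R_{1}\sum_{i=1}^{j}\phi_{i}$ and, analogously, $S_{2}=R_{2}\bigl(\phi_{1}+\cdots+\phi_{j-1}+\phi_{k}\bigr)$, after which the sums telescope to $(\phi_{k}-\phi_{j})R_{1}R_{2}$ and the claim is proven.
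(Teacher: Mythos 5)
Your proof is correct, and its skeleton coincides with the paper's: both arguments reduce \eqref{eq:Q2} to the Desnanot--Jacobi/Sylvester identity \eqref{eq:detM.detQ-bis} applied to the bordered matrix with rows $P_0,\dots,P_j$ and columns $1,\dots,j,k$, producing exactly the same four minors ($R_1,R_2$ are the paper's $\det A',\det B'$ and your shifted minors $R_3,R_4$ are its $\det C',\det D'$). Where you genuinely diverge is in the key lemma expressing $R_3$ and $R_4$ in terms of $D_xR_1$, $D_xR_2$ and $\phi$-multiples. The paper does this by decoupling the columns with separate variables $y_1,\dots,y_j$, introducing the column operators $\Delta_i=\phi_i+D_i$ that raise every entry of column $i$ one degree (so that $[\sum_i\Delta_i]Q$ is the row-shifted determinant in one stroke), and then specializing $y_i=x$ via the chain rule $D_x=\sum_iD_i$. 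You instead differentiate the single-variable determinant row by row, use $Dr_m=r_{m+1}-\phi\cdot r_m$ together with the repeated-row vanishing to isolate the shifted minor, and evaluate the correction term $S_1$ by a cofactor expansion, with the residual $\sum_i\phi_iC_{0,i}$ killed because replacing the top row by $(\phi_1,\dots,\phi_j)$ duplicates the row $r_1$. Your route is more elementary and self-contained (no auxiliary variables or operators), at the price of the explicit computation of $S_1$; the paper's multivariable device buys a one-line identification of the shifted minors but requires justifying \eqref{eq:derivate.uguali} and \eqref{eq:derivata}. Two small points of hygiene: state explicitly that the $m=0$ term of the row-differentiation is zero (since $Dr_0=0$, consistently with $r_1-\phi\cdot r_0=0$), which is why your sum for $S_1$ may start at $m=1$; and record the sign/ordering convention in the Desnanot--Jacobi identity for non-adjacent deleted columns, since here the deleted columns $j$ and $k$ are the last two positions and the identity comes out with the stated signs.
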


The proof of this result is given in \cref{sec:app-proof-tp1} for the sake of clarity of exposition.
Next, we can use the above relations 
to determine a different formulation of the sequence of Wronskians of the problem, which involves polynomials recursively defined rather than computing derivatives of eventually high orders, which we present in the following result. 

\begin{theorem} \label{firstresult}
Let $\displaystyle f(x) = \sum_{k=1}^n \gamma_k h_k(x)$ be a finite Gaussian mixture. 
Then, the GBF sequence obtained using the Polya-Ristroph formula can be rewritten as 
\begin{equation}
\label{eq:recursive-form}
\psi_j(x) =
	\begin{cases}
	f(x) \qquad & j=0 \\
	\displaystyle
	\sum_{k=j+1}^n \lambda_k \frac{Q^{j}_{1, \dots, j, k} }{ Q^{j-1}_{1, \dots, j} } \, h_k(x) \qquad  &  \forall \; 1 \le j < n. 
	\end{cases}
	\end{equation}
\end{theorem}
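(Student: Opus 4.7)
The plan is to start directly from the characterization of the GBF sequence via the Polya--Ristroph formula, namely $\psi_j = K_j(f)$ where $K_j$ is the monic $j$-th order linear differential operator defined in \eqref{eq:Km1} by
\begin{equation*}
K_j(f) = \frac{W(h_1,\dots,h_j,f)}{W(h_1,\dots,h_j)}.
\end{equation*}
Because the Wronskian is multilinear in its columns, $K_j$ is a linear operator in $f$, and since $f = \sum_{k=1}^n \gamma_k h_k$, one immediately obtains
\begin{equation*}
\psi_j(x) = \sum_{k=1}^n \gamma_k \, \frac{W(h_1,\dots,h_j,h_k)}{W(h_1,\dots,h_j)}.
\end{equation*}

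Next I would dispose of the terms with $k\le j$: in each such case the Wronskian in the numerator has two identical columns and therefore vanishes, so the sum reduces to $k = j+1,\dots,n$. For these indices, the sequence $(1,2,\dots,j,k)$ is strictly increasing, so the representation \eqref{eq:determ} applies both to numerator and denominator and yields
\begin{equation*}
\frac{W(h_1,\dots,h_j,h_k)}{W(h_1,\dots,h_j)} = \frac{\bigl(\prod_{i=1}^{j} e^{-q_i}\bigr) e^{-q_k}\, Q^{j}_{1,\dots,j,k}}{\bigl(\prod_{i=1}^{j} e^{-q_i}\bigr) Q^{j-1}_{1,\dots,j}} = \frac{Q^{j}_{1,\dots,j,k}}{Q^{j-1}_{1,\dots,j}} \, h_k(x),
\end{equation*}
since $h_k = e^{-q_k}$ in the Gaussian mixture setting. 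Plugging this back into the previous display gives exactly formula \eqref{eq:recursive-form} (with the $\lambda_k$'s identified with the mixture weights $\gamma_k$). For $j=0$ the boundary condition $\psi_0 = f$ is immediate from the GBF definition.

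There is no real obstacle here beyond careful bookkeeping: the content of the theorem is that the extraction of the common exponential factors $\prod_i e^{-q_i}$ from the Wronskians cleanly separates the polynomial information (encoded in the $Q^{j-1}_{1,\dots,j}$ and $Q^{j}_{1,\dots,j,k}$) from the Gaussian information (encoded in the $h_k$). The one point requiring a line of justification is that the denominator $Q^{j-1}_{1,\dots,j}$ is not identically zero, which follows from the linear independence of $\{h_1,\dots,h_j\}$ assumed at the start of the section, since that independence forces the Wronskian $W(h_1,\dots,h_j)$ not to vanish identically and the exponential prefactor is everywhere positive. The deeper recursive content of \cref{th:tp1} is not needed to establish \eqref{eq:recursive-form} itself; it will instead be useful for the effective computation of the polynomials $Q^{j}_{1,\dots,j,k}$ when one actually runs the algorithm.
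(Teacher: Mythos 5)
Your proof is correct, but it takes a genuinely different route from the paper's. The paper proves \eqref{eq:recursive-form} by induction on $j$: it writes the recursion \eqref{eq:gbf-seq1} as $\Psi_j=(D-s_j)\Psi_{j-1}$ with $s_j=\frac{DW_j}{W_j}-\frac{DW_{j-1}}{W_{j-1}}$, assumes the formula at level $j-1$, and verifies the induction step through the identity \eqref{eq:to-prove}, whose proof hinges on the determinantal recursion \eqref{eq:Q2} of \cref{th:tp1} (itself obtained from Sylvester's identity on bordered matrices). You instead start from the closed form $\psi_j=K_j(f)=W(h_1,\dots,h_j,f)/W(h_1,\dots,h_j)$, which the paper's Section 2.3 identifies with the GBF sequence via the Polya--Ristroph formula \eqref{eq:Km2-1}, and then the result drops out of three elementary facts: multilinearity of the Wronskian in its last column, vanishing of determinants with a repeated column (killing the terms $k\le j$), and the factorization \eqref{eq:determ} applied to the index set $(1,\dots,j,k)$. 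Your observation that \cref{th:tp1} is not logically needed for the representation itself --- only for computing the $Q$-polynomials recursively without high-order derivatives --- is accurate and is in fact a cleaner division of labour than the paper's, where the two results are entangled. What the paper's inductive argument buys in exchange is a direct verification of the first-order recursive scheme $\Psi_j=(D-s_j)\Psi_{j-1}$ that underlies the implementation, whereas your argument validates only the end formula. Two minor points: the mismatch between $\gamma_k$ in the hypothesis and $\lambda_k$ in \eqref{eq:recursive-form} is the paper's own notational slip, which you resolve correctly; and your justification that $Q^{j-1}_{1,\dots,j}\not\equiv 0$ from linear independence of the real analytic $h_i$ is sound, though strictly the identity need only hold off the discrete zero set of the denominators, as in \cref{GBFdef}.
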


The proof of this result is given in the  \cref{sec:app-proof-first-result}. 

\begin{remark}
The conundrum of opting for one choice or another when building the GBF sequence naturally arises for computational purposes. 
In fact, whilst the choice of the Wronskians could lead to high-order derivatives and large matrices, it would also allow a more neat and direct definition of the objects involved in the building process of the GBF sequence. 
On the other hand, even if the polynomial recursive approach appears to be quite faster because only first-order derivatives are involved, it is to be noted that it entails hidden recursive dependencies as long as it requires the execution of polynomial division. 

In the next subsection we will describe the main steps of an algorithm which computes the GBF sequence in accordance with the Wronskians approach. 
This is due, on the one hand, to avoid multiple executions of polynomial divisions which could cause long execution times if performed with symbolic coefficients and incorrect common factors detection in the case of numerical coefficients; on the other hand, to the availability in the literature of a fast algorithm for determinant computation \cite{Kasyanov2022}.

For the sake of completeness, we also computed the Wronskians using symbolic expressions and stored them in a database (which is available as an attachment to this work) for dimensions spanning from $2$ to $16$. 
This is done in order to provide the reader with general ready-to-use results, and we hope that it may prompt future research in the direction of finding simpler recursive structures for GBF sequences. 
In this regard, we highlight that all our results were proved using formal mathematical procedures, but it appears reasonable to envisage alternative methods as for instance artificial intelligence self-learning algorithms to investigate structure properties of GBF sequences.
\end{remark}

\subsection{Implementation}

We have developed a software in Matlab with the aim of  investigating how the GBF approach works in practice.

The algorithm takes as inputs the following parameters: the number of Gaussians involved in the mixture, their means, variances, the mixing coefficients, and the desired accuracy level for the searching-roots algorithm. It is to be noted that the procedure does not require to be given a bounded interval to test the mixture on because such interval is computed internally. Hence, a searching-roots problem on the real line is reduced to a constrained searching-roots procedure on a bounded interval. As outputs, the software returns the location of the eventual sign-changing roots and the bounded interval in which they are contained.

If the software returns no sign-changing zeros for the Gaussian mixture $f(x)$, and $f(x)$ is positive in at least one point, then it is non-negative everywhere and it defines a proper probability density function.

\subsection*{Technical aspects and execution time}

To further reduce computational time, the classical bisection method is replaced by Ridders' method which proves to be more efficient in searching-roots problems \cite{Ridders}. 
Execution times (in secs) are displayed in  \cref{tab:1}. They were computed by running 100 simulations for each dimension spanning from 2 to 12 where the parameters were chosen as follows:
\begin{itemize}
    \item the means where randomly picked in [-10, 10];
    \item the variances were randomly picked in [0.1, 1];
    \item the mixing coefficients were randomly picked in [-1, 1];
    \item The accuracy level was set equal to $2.2204^{-16}$.
\end{itemize}

\medskip

\begin{table}[!htbp]
	\begin{center}
		\begin{tabular}{|lcccccp{2cm}|}
			\hline
			\textbf{n} & \textbf{nsim} & \textbf{min}  & \textbf{median}  & \textbf{average}  & \textbf{max} & \textbf{\% time GBF sequence}\\
			\hline
			2 & 100 & 0.23 & 0.60 & 0.77 & 1.55 & 28.92 \\
			\hline
			3 & 100 & 0.43 & 1.16 & 1.20 & 3.98 & 29.17 \\
			\hline
			4 & 100 & 0.83 & 2.79 & 2.69 & 5.74 & 28.26 \\
			\hline
			5 & 100 & 1.49 & 5.83 & 5.92 & 11.14 & 25.82 \\
			\hline
			6 & 100 & 5.06 & 10.85 & 11.35 & 29.91 & 24.23 \\
			\hline
			7 & 100 & 7.96 & 22.73 & 22.12 & 38.28 & 23.95 \\
			\hline
			8 & 100 & 21.20 & 42.15 & 42.88 & 62.92 & 37.16 \\
			\hline
			9 & 100 & 50.44 & 91.45 & 91.25 & 183.57 & 36.52 \\
			\hline
			10 & 100 & 111.18 & 177.90 & 177.68 & 256.83 & 49.26 \\
			\hline
			11 & 100 & 338.22 &  473.11 & 486.49 & 922.00 & 68.32 \\
			\hline
			12 & 100 & 1260.00 &  1466.10 & 1533.40 & 2371.40 & 79.03 \\
			\hline
		\end{tabular}
	\vspace{0.2cm}
	\caption{Analysis of execution times for dimensions spanning from $n=2$ to $n = 12$.}
	\label{tab:1}
	\end{center}
\end{table}

Furthermore, average execution times for higher dimensions were estimated by fitting available data by a function of the kind $a e^{bx}+c e^{dx}$ $(a = 0.4517, b = 0.5572, c = 3.61e-05, d = 1.441)$ with the Matlab function fit. The results are shown in  \cref{fig:exec_times}.

\begin{figure}[!htbp]
	\centering\begin{minipage}[b]{0.7\textwidth}
		\includegraphics[width=\textwidth]{./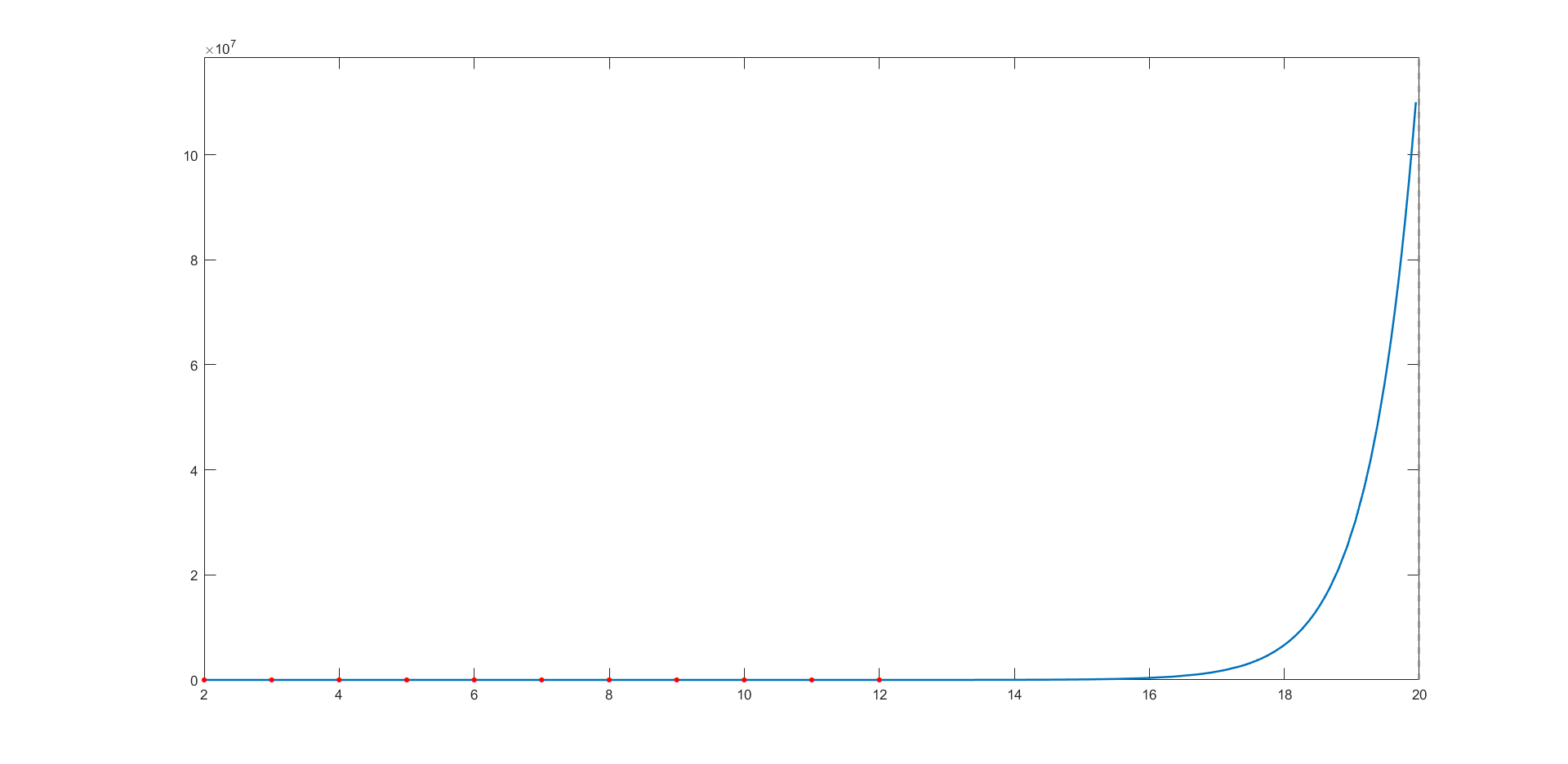}
	\end{minipage}
\caption{Fitted curve for average execution times up to $n=20$.}
\label{fig:exec_times}
\end{figure}

As we can observe from the plot, average execution times tend to significantly increase from $n = 16$, reaching values in the range of $10^7$ secs for $n = 20$. 
The increased execution time is due to the heavier computation of the GBF sequences using the Wronskian approach - as long as to the available hardware - as we can observe from the last column of \cref{tab:1} showing the percentage of time employed in the construction of the GBF sequence. 
Furthermore, percentage data highlight the interplay between the two main building blocks of the algorithm, namely the construction of the GBF sequence and the searching-root procedure, along different dimensions. 
In fact, we can observe that for $ 2 \leq n \leq 4$ the percentage time dedicated to the construction of the GBF sequence stabilizes around $28-29\%$, while it decreases around $24-26\%$ for $ 5 \leq n \leq 7 $ before reaching higher values spanning from $36\%$ to $79\%$ for $ 8 \leq n \leq 12$. 
Therefore, at least at present, we suggest that applications that require high execution times rely on computing systems with large computational power.

\section{Wasserstein distance}
\label{s:Was}

In this section, we aim to present a different application where our algorithm can be useful since, as we shall see below, the computation of the Wasserstein distance between two Gaussian mixtures can be reduced to a simple sum, once we know the position of the zeros of the difference of the corresponding cumulative distribution functions. 
We are not interested in discussing the topic of Wasserstein distance in general and we refer to the existing literature for further discussion, see, e.g., \cite{Ambrosio2008}. 
In order to fix the notation, we start with a few definitions.
We let $S \subset \mathbb{R}^n$ denote a general domain, endowed with the Euclidean distance. Below, we usually consider $S= \mathbb{R}$.

\begin{definition}\label{d:1}
Let $M_p(S)$ be the space of all probability measures $\mu$ on $S$ with finite $p$-th moment for some $x_0 \in S$:
\begin{align*}
\int_S |x-x_0|^p \, \mu({\rm d}x) < +\infty.
\end{align*}
Then the Wasserstein-$p$ distance between two probability measures $\mu$ and $\nu$ in $M_p(S)$ is defined as
\begin{align*}
W_p(\mu,\nu) = \left( \inf_\gamma \int_{S \times S} |x-y|^p \, \gamma({\rm d}x,{\rm d}y) \right)^{1/p},
\end{align*}
where the infimum is taken among all two-dimensional measures $\gamma$ with marginals $\mu$ and $\nu$ respectively.
\end{definition}

It is proved in \cite[Theorem 20.1]{Dudley1976} that in the particular case when $p = 1$ we obtain
\begin{align*}
W_1(\mu,\nu) = \sup \left\{ \int \varphi \, {\rm d}(\mu - \nu) \,\mid\, \varphi: S \to \mathbb{R} \quad 1-{\rm Lipschitz}\right\}.
\end{align*}
An alternate form of the $W_1$ distance was given by Dall'Aglio \cite{DallAglio1956}:\footnote{notice that we do not make any assumption on the form of the distribution}
\begin{align}\label{eq:2}
W_1(\mu,\nu) = \int_0^1 \left| F^{-1}(u) - G^{-1}(u)\right| \, {\rm d}u,
\end{align}
where $F$ and $G$ are the cumulative distribution functions for $\mu$ and $\nu$ respectively, and $F^{-1}$, $G^{-1}$ denote the quantile functions of $\mu$ and $\nu$ respectively. 
For the sake of completeness, we recall the definition below.

\begin{remark}
Let us recall the definition of \emph{generalized} inverse (compare \cite{Embrechts2013}) for a distribution function $F$ (also called the 
\emph{quantile function} of $F$).
$F^{-1}: [0,1] \to \bar{\mathbbm{R}} = [-\infty,+\infty]$ is defined by
\begin{align*}
F^{-1}(u) = \inf \left\{x \in \mathbb{R}\,:\, F(x) \ge u\right\}, \qquad u\in [0,1],
\end{align*}
with the convention that $\inf \emptyset = +\infty$. By this definition, the 0-quantile of $F$ is always $F^{-1}(0) = -\infty$.
\end{remark}

For our purposes, the following result is the best way to treat the distance between one-dimensional distributions.
Assume that the laws $\mu$ and $\nu$ have cumulative distribution functions $F$ and $G$, respectively. Then the following result is classical \cite{Dudley1976, Vallander1974}.

\begin{theorem}
The Wasserstein-1 distance between $\mu$ and $\nu$ in $M_1(\mathbb{R})$ (see  \cref{d:1}) is equal to the $L^1$-distance between the corresponding cumulative distribution functions $F$ and $G$:
\begin{align}\label{eq:1}
W_1(\mu,\nu) = \int_{\mathbb{R}} |F(x) - G(x)| \, {\rm d}x.
\end{align}
\end{theorem}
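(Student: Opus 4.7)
The plan is to start from the Dall'Aglio representation \eqref{eq:2} already recalled in the text,
\[
W_1(\mu,\nu) = \int_0^1 \left| F^{-1}(u) - G^{-1}(u)\right| \, {\rm d}u,
\]
and convert it, via a Fubini-type swap, into the $L^1$-distance between the cumulative distribution functions. This is essentially an area-of-the-region-between-two-graphs argument, stated twice: once with the graphs as functions of the probability variable $u$, and once with the graphs as functions of the spatial variable $x$; it is the same region, measured in two coordinate systems.

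First, I would record the elementary identity, valid for any two real numbers $a,b$,
\[
|a-b| \;=\; \int_{\mathbb{R}} \bigl|\mathbf{1}_{\{a \le x\}} - \mathbf{1}_{\{b \le x\}}\bigr|\, {\rm d}x,
\]
since the integrand equals $\mathbf{1}_{[\min(a,b),\max(a,b))}(x)$. Applying this with $a = F^{-1}(u)$ and $b = G^{-1}(u)$ yields
\[
\left|F^{-1}(u) - G^{-1}(u)\right| = \int_{\mathbb{R}} \bigl|\mathbf{1}_{\{F^{-1}(u) \le x\}} - \mathbf{1}_{\{G^{-1}(u) \le x\}}\bigr|\, {\rm d}x.
\]

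Next I would invoke the Galois-type duality for the generalized inverse, which follows from the definition given in the remark just above the theorem: for every $x \in \mathbb{R}$ and $u \in (0,1)$,
\[
F^{-1}(u) \le x \iff u \le F(x),
\]
so that $\mathbf{1}_{\{F^{-1}(u) \le x\}} = \mathbf{1}_{\{u \le F(x)\}}$ (and similarly for $G$), possibly up to a Lebesgue-null set of $u$'s (namely the at most countable set of atoms of $F$), which does not affect the integral. Plugging this into the Dall'Aglio formula and swapping the order of integration by Tonelli's theorem gives
\[
W_1(\mu,\nu) = \int_{\mathbb{R}} \int_0^1 \bigl|\mathbf{1}_{\{u \le F(x)\}} - \mathbf{1}_{\{u \le G(x)\}}\bigr|\, {\rm d}u\, {\rm d}x.
\]
Finally, the inner integrand in $u$ is the indicator of the interval with endpoints $F(x)$ and $G(x)$, both of which lie in $[0,1]$, so the inner integral equals exactly $|F(x) - G(x)|$, yielding \eqref{eq:1}.

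The main technical obstacle is the rigorous handling of the duality $F^{-1}(u) \le x \iff u \le F(x)$ at points where $F$ has a jump or a flat, since the generalized inverse is only right-continuous and the equivalence can fail on a set of measure zero in $u$. I would address this by observing that the set of $u$'s for which the equivalence fails is contained in the image by $F$ of its jump points, which is at most countable and hence Lebesgue-negligible, so the indicator identity holds for a.e.\ $u$ and the Fubini swap is legitimate; the finiteness of both iterated integrals is guaranteed by $\mu,\nu \in M_1(\mathbb{R})$, which makes both $F^{-1}, G^{-1}$ integrable on $(0,1)$ and both $F,1-F,G,1-G$ integrable on the appropriate half-lines.
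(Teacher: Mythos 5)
Your proposal is correct, but it follows a genuinely different route from what the paper actually does. The paper does not prove the equality \eqref{eq:1} at all: it cites it as classical (Dudley, Vallander) and then, under the heading of proving it ``for completeness,'' only establishes the preliminary lemma that $w(x)=F(x)-G(x)$ is absolutely integrable when $\mu,\nu\in M_1(\mathbb{R})$, by splitting $\int_{-\infty}^0 F + \int_0^{+\infty}(1-F)$ and recognising it as $\mathbb{E}_\mu[|y|]$ via Fubini. You instead give a self-contained derivation of the identity itself, starting from the Dall'Aglio quantile representation \eqref{eq:2} (which the paper has already recalled), writing $|F^{-1}(u)-G^{-1}(u)|$ as $\int_{\mathbb{R}}|\mathbf{1}_{\{F^{-1}(u)\le x\}}-\mathbf{1}_{\{G^{-1}(u)\le x\}}|\,{\rm d}x$, converting via the Galois duality $F^{-1}(u)\le x \iff u\le F(x)$, and swapping the integrals by Tonelli. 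This is a standard and valid argument; what it buys is an actual proof of the equality of the two representations rather than a citation, at the cost of taking \eqref{eq:2} as the starting point (which the paper also only cites). Two small remarks: since $F$ is right-continuous, the equivalence $F^{-1}(u)\le x \iff u\le F(x)$ in fact holds for \emph{every} $u\in(0,1)$ and $x\in\mathbb{R}$ (one has $F(F^{-1}(u))\ge u$ by right-continuity and monotonicity), so your null-set caveat is unnecessary, though harmless; and Tonelli requires no integrability hypothesis for nonnegative integrands, so the $M_1$ assumption is needed only to conclude finiteness of the common value, not to justify the swap. Your argument therefore also covers, and slightly clarifies, the content of the paper's integrability lemma.
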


The above formula is well defined in $M_1(\mathbb{R})$; for completeness (and since it seems a little tricky) we prove it below. 

\begin{lemma}
Assume that $\mu$ and $\nu$ belong to $M_1(\mathbb{R})$. Then $w(x) = F(x) - G(x)$ is absolutely integrable.
\end{lemma}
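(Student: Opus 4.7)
The plan is to split the real line at $0$ and use the Fubini/Tonelli identity to recognize the tail integrals of $F$ and $G$ as (truncated) first moments of $\mu$ and $\nu$.

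First I would work on the positive half-line. For $x\ge 0$, since $1-F(x)=\mu((x,\infty))$ and $1-G(x)=\nu((x,\infty))$, one has the pointwise bound
\begin{equation*}
|F(x)-G(x)| = |(1-G(x))-(1-F(x))| \le (1-F(x)) + (1-G(x)).
\end{equation*}
By Tonelli's theorem,
\begin{equation*}
\int_0^{\infty}(1-F(x))\,{\rm d}x = \int_0^{\infty}\int_{\mathbb{R}}\mathbbm{1}_{\{y>x\}}\,\mu({\rm d}y)\,{\rm d}x = \int_{\mathbb{R}} \max(y,0)\,\mu({\rm d}y) \le \int_{\mathbb{R}}|y-0|\,\mu({\rm d}y),
\end{equation*}
which is finite because $\mu\in M_1(\mathbb{R})$ (taking $x_0=0$ in \cref{d:1}). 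The analogous computation for $\nu$ yields integrability of $|F-G|$ on $[0,\infty)$.

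Next I would mirror the argument on $(-\infty,0]$. For $x\le 0$, $F(x)=\mu((-\infty,x])$ and $G(x)=\nu((-\infty,x])$, so $|F(x)-G(x)|\le F(x)+G(x)$, and Tonelli again gives
\begin{equation*}
\int_{-\infty}^{0} F(x)\,{\rm d}x = \int_{\mathbb{R}}\max(-y,0)\,\mu({\rm d}y) \le \int_{\mathbb{R}}|y|\,\mu({\rm d}y) < \infty,
\end{equation*}
and likewise for $G$. Combining the two halves gives $\int_{\mathbb{R}}|F(x)-G(x)|\,{\rm d}x<\infty$, as required.

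No step is really an obstacle here; the only subtlety worth flagging is the need to use two different pointwise bounds on the two half-lines (one for the tails $1-F$, $1-G$ and one for the values $F$, $G$ themselves), since a naive single bound like $|F-G|\le 1$ is of course not integrable over $\mathbb{R}$. Once this split is made, the finiteness of the first moment hypothesis $\mu,\nu\in M_1(\mathbb{R})$ does exactly the right work through Tonelli's theorem, and choosing the reference point $x_0=0$ in the definition of $M_1$ is convenient but inessential, since any other choice differs from this by an additive constant that is absorbed in the triangle inequality.
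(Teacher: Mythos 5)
Your proof is correct and follows essentially the same route as the paper's: split at $0$, bound $|F-G|$ by $F+G$ on the left and by $(1-F)+(1-G)$ on the right, and use Tonelli to identify each tail integral with a truncated first moment, which is finite since $\mu,\nu\in M_1(\mathbb{R})$. The only (cosmetic) difference is that the paper combines the two half-line contributions into $\mathbb{E}_\mu[|y|]$ at the end rather than bounding each by the full first moment separately.
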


\begin{proof}
We write
\begin{multline*}
W_1(\mu, \nu) = \int_{-\infty}^0 |F(x) - G(x)| \, {\rm d}x + \int_0^{+\infty} |(1-F(x)) - (1-G(x))| \, {\rm d}x
\\
\le \int_{-\infty}^0 \left\{ |F(x)| + |G(x)| \right\} \, {\rm d}x + \int_0^{+\infty} \left\{ |1-F(x)| + |1-G(x)| \right\} \, {\rm d}x.
\end{multline*}
It is sufficient to prove that the following quantity is finite for arbitrary $\mu$:
\begin{multline*}
\int_{-\infty}^0 F(x) \, {\rm d}x + \int_0^{+\infty} (1-F(x)) \, {\rm d}x
= \int_{-\infty}^0 \int_{-\infty}^x \mu({\rm d}y) \, {\rm d}x + \int_0^{+\infty} \int_x^\infty \mu({\rm d}y) \, {\rm d}x
\\
= \int_{-\infty}^0 \int_{y}^0  \, {\rm d}x \, \mu({\rm d}y) + \int_0^{+\infty} \int_0^y  \, {\rm d}x \, \mu({\rm d}y)
= \int_{-\infty}^0 |y|  \, \mu({\rm d}y) + \int_0^{+\infty} |y|  \, \mu({\rm d}y) = \mathbb{E}_\mu[|y|]
\end{multline*}
and the last quantity is finite by assumption.
\end{proof}

In the following result we propose an analog of Markov-Chebyshev's inequality for cumulative distribution functions. 
Here, we need to impose the existence of a finite $p$-moment, $p > 1$, for the probability density function.

\begin{lemma}
Let $X$ be an absolutely continuous random variable with probability density function $f(x)$, such that  the finite $p$-moment exists for some $p > 1$. Then for any $L > 0$:
\begin{align}\label{eq:gen-cheb}
\int_L^\infty (1-F(x)) \, {\rm d}x \le \frac{1}{p-1} \, \frac{\mathbb{E}[|X|^p]}{L^{p-1}}.
\end{align}
\end{lemma}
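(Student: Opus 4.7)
The plan is to combine Markov's inequality with a direct integration step. First I would establish a pointwise tail bound: for any $x > 0$, the event $\{X > x\}$ is contained in $\{|X| \ge x\}$, so
\begin{align*}
1 - F(x) = \mathbb{P}(X > x) \le \mathbb{P}(|X| \ge x) = \mathbb{P}(|X|^p \ge x^p) \le \frac{\mathbb{E}[|X|^p]}{x^p},
\end{align*}
where the last step is Markov's inequality applied to the non-negative random variable $|X|^p$, which has finite expectation by hypothesis.

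Next I would integrate this pointwise bound over $[L,\infty)$. Since $p > 1$, the integral $\int_L^\infty x^{-p}\,{\rm d}x$ converges and equals $L^{1-p}/(p-1)$. Thus, by monotonicity of the integral,
\begin{align*}
\int_L^\infty (1-F(x))\,{\rm d}x \le \mathbb{E}[|X|^p]\int_L^\infty \frac{{\rm d}x}{x^p} = \frac{1}{p-1}\,\frac{\mathbb{E}[|X|^p]}{L^{p-1}},
\end{align*}
which is exactly \eqref{eq:gen-cheb}.

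There is no real obstacle: this is essentially Markov's inequality combined with an elementary computation. The role of the hypothesis $p > 1$ is precisely to guarantee integrability of $x^{-p}$ at infinity, which produces the factor $1/(p-1)$ in the stated bound. A parallel route would be to use Fubini to rewrite $\int_L^\infty (1-F(x))\,{\rm d}x = \int_L^\infty (y-L)f(y)\,{\rm d}y$ and then bound $(y-L) \le y^p/((p-1)L^{p-1})$ for $y \ge L$, but the Markov-based argument above seems the cleanest and most self-contained version.
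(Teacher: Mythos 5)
Your proof is correct and is essentially the same as the paper's: the paper writes $1-F(x)=\int_x^\infty f(y)\,{\rm d}y$, bounds the integrand by $\frac{|y|^p}{x^p}f(y)$ and extends the inner integral to all of $\mathbb{R}$, which is exactly Markov's inequality spelled out for a density, and then performs the same outer integration $\int_L^\infty x^{-p}\,{\rm d}x = \frac{L^{1-p}}{p-1}$. Invoking Markov's inequality by name rather than via the explicit double integral is a purely cosmetic difference.
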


\begin{proof}
We may compute
\begin{multline*}
\int_L^\infty \int_x^\infty f(y) \, {\rm d}y \, {\rm d}x \le \int_L^\infty \int_x^{\infty} \frac{|y|^p}{x^p} f(y) \, {\rm d}y \, {\rm d}x 
\\
\le  \int_L^\infty x^{-p} \left( \int_{-\infty}^\infty |y|^p f(y) \, {\rm d}y  \right) \, {\rm d}x 
= \frac{1}{p-1} \frac{1}{L^{p-1}} \mathbb{E}[|X|^p]
\end{multline*}
as required.
\end{proof}

A similar computation gives a bound for the integral on the interval $(-\infty,-L)$ for the distribution function $F(x)$.
We can therefore apply this inequality to the difference $w(x)$.

\begin{lemma}
Let $X$, $Y$ be random variables with probability density functions $f(x)$, $g(x)$ and  distribution functions $F(x)$ and $G(x)$, respectively, such that their finite $p$-moments exist for some $p > 1$. 
Then there exists a constant $C$, depending on $p$ and the $p$-th moments of $X$ and $Y$,
such that
\begin{align}\label{eq:gen-cheb-diff}
\int_{\mathbb{R} \setminus [-L,L]} |w(x)| \, {\rm d}x \le C \frac{1}{L^{p-1}}
\end{align}
where $w(x) = F(x) - G(x)$
\end{lemma}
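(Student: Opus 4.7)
The plan is to reduce the statement to the preceding lemma (inequality \eqref{eq:gen-cheb}) by splitting the integration domain and bounding $|w(x)|$ by a sum of tail terms for $F$ and $G$ separately.

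First I would write
\begin{equation*}
\int_{\mathbb{R}\setminus[-L,L]} |w(x)|\,{\rm d}x = \int_{L}^{\infty}|F(x)-G(x)|\,{\rm d}x + \int_{-\infty}^{-L}|F(x)-G(x)|\,{\rm d}x .
\end{equation*}
On the right tail I rewrite $F(x)-G(x) = (1-G(x)) - (1-F(x))$ and use the triangle inequality to obtain
\begin{equation*}
|F(x)-G(x)| \le (1-F(x)) + (1-G(x)), \qquad x > L .
\end{equation*}
On the left tail I simply use $|F(x)-G(x)| \le F(x) + G(x)$ for $x < -L$.

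Next I would apply inequality \eqref{eq:gen-cheb} to each of the four resulting integrals. The previous lemma is stated for $\int_L^\infty(1-F(x))\,{\rm d}x$; the analogous estimate $\int_{-\infty}^{-L} F(x)\,{\rm d}x \le \frac{1}{p-1}\,\mathbb{E}[|X|^p]/L^{p-1}$ is obtained by the symmetric Fubini--type computation mentioned in the paragraph immediately after that lemma (swap the roles of $(L,\infty)$ and $(-\infty,-L)$, and of $1-F$ and $F$). Applying the lemma twice on the right (with $X$ and $Y$) and its mirror image twice on the left yields
\begin{equation*}
\int_{\mathbb{R}\setminus[-L,L]}|w(x)|\,{\rm d}x \le \frac{2}{p-1}\,\frac{\mathbb{E}[|X|^p]+\mathbb{E}[|Y|^p]}{L^{p-1}} .
\end{equation*}
This gives the claimed bound with the explicit constant $C = \tfrac{2}{p-1}(\mathbb{E}[|X|^p]+\mathbb{E}[|Y|^p])$, which depends only on $p$ and the $p$-th moments of $X$ and $Y$ as required.

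There is no serious obstacle here; the only point that deserves a line of justification is the symmetric version of \eqref{eq:gen-cheb} on $(-\infty,-L)$, since the previous lemma is stated only for the right tail. That follows from exactly the same Fubini exchange used in its proof, after noting that for $x<-L<0$ one has $|y|^p/|x|^p \ge 1$ on the region $\{y \le x\}$, so $F(x) = \int_{-\infty}^x f(y)\,{\rm d}y \le \int_{-\infty}^x |y|^p|x|^{-p} f(y)\,{\rm d}y$, and integration in $x$ over $(-\infty,-L)$ produces the factor $\tfrac{1}{p-1}L^{-(p-1)}$.
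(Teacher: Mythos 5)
Your proof is correct and follows essentially the same route as the paper: split off the two tails, rewrite $F-G$ as $(1-G)-(1-F)$ on the right, apply the preceding tail lemma \eqref{eq:gen-cheb} (and its mirror image on $(-\infty,-L)$) to each term, and sum. The paper's own proof is just a terser version of this, even leaving the left-tail case to "a similar bound", which you have spelled out correctly.
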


\begin{proof}
On the one hand, it is sufficient to compute
\begin{align*}
\int_L^\infty |w(x)| \, {\rm d}x = \int_L^{+\infty} |(1-F(x)) - (1-G(x))| \, {\rm d}x \le \frac{1}{p-1} \frac{\mathbb{E}[|X|^p + |Y|^p]}{L^{p-1}}
\end{align*}
Since a similar bound holds in the negative semi-axes, the thesis follows.
\end{proof}

\subsection{An algorithm for the computation of Wasserstein-1 distance for EPT functions}

How to compute the quantity in \eqref{eq:1}? Our approach will be to consider the difference $w(x) = F(x) - G(x)$ and the sign-changing zeros of $w(x)$.
A nice feature of EPT functions is that they form an algebra, and further that the cumulative density function $F(x)$ of an EPT probability distribution function $f(x) = c e^{A x}b$ is again an EPT function, with cumulative distribution function $F(x) = 1 + c A^{-1} e^{A x} b$. It follows that for EPT functions $F$ and $G$, the difference $w(x)$ is an EPT function as well.

\begin{remark}\label{ass2}
Notice that an EPT function, defined in \eqref{e:ept}, is an entire function, hence all zeros of $w$ are isolated and $w$ is a continuous function.
In particular, on any interval $[a,b]$ there will be at most finitely many sign-changing zeros.
\end{remark}

Let us assume  that there are at most a finite number of sign-changing zeros of $w$, say $\xi_1, \dots, \xi_n$, on the whole positive half-line, and that these have been identified using the GBF method. We use the convention $\xi_0 = 0$ and $\xi_{n+1} = +\infty$.
Then we can compute
\begin{align*}
\int_{0}^{\infty} |F(x) - G(x)| \, {\rm d}x =
\sum_{j=0}^n \int_{\xi_j}^{\xi_{j+1}} \left(F(x) - G(x) \right) \mathop{\rm sgn}\left(F(x) - G(x)\right) \, {\rm d}x.
\end{align*}
On each of the intervals $(\xi_j,\xi_{j+1})$ the sign is constant so it can be taken out of the integral. Further,
integrals of the form
\begin{align*}
\int_{\xi_j}^{\xi_{j+1}} \left(F(x) - G(x) \right) \, {\rm d}x
\end{align*}
can be calculated explicitly in  the class of EPF functions.
In fact we have
\begin{align*}
\int_{\xi_j}^{\xi_{j+1}} \left( F(x) - G(x) \right) \, {\rm d}x
= c_1 A_1^{-2}\left( e^{A_1 \xi_{j+1}} - e^{A_1 \xi_{j}} \right) b_1 - c_2 A_2^{-2}\left( e^{A_2 \xi_{j+1}} - e^{A_2 \xi_{j}} \right) b_2.
\end{align*}
Since $A_1$ as well as $A_2$ are continuous-time asymptotically stable, meaning that their spectra satisfy $\sigma(A_i) \subset \{z \in \mathbb{C} \,:\, \Re(z) < 0\}$,
it follows that 
\begin{align*}
\int_{\xi_n}^{\xi_{n+1} = +\infty} \left( F(x) - G(x) \right) \, {\rm d}x = - c_1 A_1^{-2} e^{A_1 \xi_n}  b_1 + c_2 A_2^{-2} e^{A_2 \xi_n}  b_2.
\end{align*} 
So we obtain the following formula for the Wasserstein-1 distance between $\mu$ and $\nu$:
\begin{multline*}
W(\mu,\nu) = \sum_{j=0}^{n-1} \left| c_1 A_1^{-2}\left( e^{A_1 \xi_{j+1}} - e^{A_1 \xi_{j}} \right) b_1 - c_2 A_2^{-2}\left( e^{A_2 \xi_{j+1}} - e^{A_2 \xi_{j}} \right) b_2 \right| \\ + \left| c_1 A_1^{-2} e^{A_1 \xi_n}  b_1 - c_2 A_2^{-2} e^{A_2 \xi_n}  b_2 \right|,
\end{multline*}
where again we  set $\xi_0 = 0$.

\subsection{An algorithm for the computation of  Wasserstein-1 distance for Gaussian mixtures}

Here we ask the same question: \emph{how to compute the quantity in \eqref{eq:1}} in the case of finite Gaussian mixtures?
Again, the two crucial issues are:
\begin{itemize}
\item to compute the sign-changing zeros of the difference of the cumulative distribution functions involved;
\item to calculate the integrals of the difference of the cumulative distribution functions involved, over intervals on which the sign of the difference does not change.
\end{itemize}

In this case, both the distribution functions involved and their differences are of the form
\begin{align*}
w(x) = \sum_{i=1}^K \mu_i \Phi(\alpha_i x + \beta_i),
\end{align*}
where $\Phi$ is the standard Gaussian distribution function. 

A GBF sequence is obtained for $w$ by taking
$w_0(x) = w(x)$, $w_1(x) = Dw(x)$. Then $w_1(x)$ is a function of the form
\begin{align*}
w_1(x) = \sum_{i=1}^K \mu_i \alpha_i \phi(\alpha_i x + \beta_i),
\end{align*}
where $\phi(x) = \frac{1}{\sqrt{2 \pi}} e^{-x^2/2}$.
\\
We have already developed a GBF sequence for $w_1$ in  \cref{sec:ggm}, say $\psi_0 = w_1, \psi_1, \dots, \psi_K$. 
Then a GBF sequence for $w$ is given by $w_0 = w, w_1, \psi_1, \dots, \psi_K$. 

\begin{lemma}
A Gaussian mixture  $\displaystyle f = \sum_{i=1}^K \delta_i \phi(\alpha_i x + \beta_i)$, and hence also $w$, has at most finitely many sign-changing zeros and an interval can be identified containing all these.
\end{lemma}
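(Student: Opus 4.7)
The plan is to combine real-analyticity with asymptotic dominance of a single Gaussian summand at $\pm\infty$, and then to bootstrap from $f$ to $w$ via its derivative. Since each $\phi(\alpha_i x + \beta_i)$ is entire, $f$ is itself entire and hence real analytic; its zero set on any compact interval is therefore finite, so it suffices to exhibit a bounded interval $[-L,L]$ outside which $f$ has constant, non-zero sign.

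After removing zero-weight terms, using the evenness of $\phi$ to arrange $\alpha_i>0$, and merging any repeated Gaussians so that the pairs $(\alpha_i,\beta_i)$ are pairwise distinct, I would order the indices lexicographically: first by $\alpha_i^2$ (smallest first, slowest decay) and, among those tied, by $\alpha_i\beta_i$ (smallest first, so that the linear term $-\alpha_i\beta_i x$ is largest as $x\to+\infty$). Let $i^*_+$ be the first index in this order. For any other $i$, the difference
\[
\tfrac{1}{2}(\alpha_{i^*_+}x+\beta_{i^*_+})^2 - \tfrac{1}{2}(\alpha_i x+\beta_i)^2
\]
is a polynomial of degree at most $2$ in $x$ whose leading nonzero coefficient is negative by construction, so
\[
\frac{\phi(\alpha_i x+\beta_i)}{\phi(\alpha_{i^*_+}x+\beta_{i^*_+})} \longrightarrow 0 \quad\text{as } x\to+\infty.
\]
Consequently $f(x)/\phi(\alpha_{i^*_+}x+\beta_{i^*_+}) \to \delta_{i^*_+}\neq 0$, so $f$ has constant sign $\operatorname{sgn}(\delta_{i^*_+})$ on some half-line $(L_+,\infty)$. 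A symmetric argument with reversed tie-breaking yields a dominant index $i^*_-$ and a threshold $L_-$ at the other end. Each $L_\pm$ can be made effective by solving, for each $i\neq i^*_\pm$, an elementary inequality of the form $|\delta_i/\delta_{i^*_\pm}|\exp(\text{quadratic in } x) < 1/(2(K-1))$ and taking the maximum over $i$. Together with real-analyticity inside the compact interval $[-L,L]$, $L=\max(L_+,L_-)$, this proves the claim for $f$.

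For $w = F - G$, I would pass to $w_1 := Dw = \sum_i \mu_i\alpha_i\phi(\alpha_i x + \beta_i)$, which is exactly of the form just treated. Applying the previous argument to $w_1$ locates all its sign-changing zeros in an explicit $[-L',L']$, so $w$ is strictly monotonic on each tail $(L',\infty)$ and $(-\infty,-L')$. Since $F$ and $G$ share the same limits at $\pm\infty$, $w\to 0$ at both ends; combined with tail monotonicity this forces $w$ to have constant sign outside $[-L',L']$. The sign-changing zeros of $w$ therefore all lie in $[-L',L']$ and, by real-analyticity, are finite in number.

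The main obstacle I anticipate is the tie-breaking analysis: when several Gaussians share the smallest $\alpha_i^2$, one must drop to the linear coefficient of the exponent, verify that a further tie would force $(\alpha_i,\beta_i)$ to coincide (so the corresponding terms can legitimately be merged), and ensure that $\delta_{i^*_\pm}\neq 0$ after such simplifications. With that bookkeeping in place, the argument is essentially an exercise in elementary asymptotics.
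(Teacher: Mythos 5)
Your proof is correct and follows essentially the same route as the paper: identify the asymptotically dominant Gaussian at each tail by ordering the quadratic exponents lexicographically (quadratic coefficient first, then the linear one, with opposite tie-breaking at $+\infty$ and $-\infty$), factor it out, and conclude that $f$ has constant sign outside a computable bounded interval, finiteness inside following from real-analyticity. Your explicit treatment of $w$ via $w_1=Dw$, tail monotonicity, and $w\to 0$ at $\pm\infty$ fills in a step the paper leaves implicit, and your care with reducing to $\alpha_i>0$ and merging coincident Gaussians is a welcome tightening of the paper's tacit assumptions.
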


\begin{proof}
We emphasise that this result holds for generalized Gaussian mixtures with a similar proof. 
We aim to prove that there exists $x_0 > 0$ such that, for all $x > x_0$, $f(x) \not= 0$. 
Notice that
\begin{align*}
f(x) = \sum_{i=1}^K \delta_i \phi(\alpha_i x + \beta_i) = \frac{1}{\sqrt{2\pi}} \sum_{i=1}^K \delta_i e^{-\beta_i^2/2} e^{-q_i(x)}, \qquad q_i(x) = {\frac{\alpha_i^2}{2} x^2 + \alpha_i \beta_i x}
\end{align*}
can be ordered in such a way that 
\par\smallskip\noindent\begin{minipage}{\textwidth}
\begin{itemize}
\item $\alpha_i < \alpha_{i+1}$, or 
\item $\alpha_i = \alpha_{i+1}$ and $\beta_i < \beta_{i+1}$
\end{itemize}    
\end{minipage}
\par\smallskip\noindent (a further equality cannot hold, since otherwise the two polynomials would have been equal). 
It follows that
\begin{align*}
f(x) = \frac{1}{\sqrt{2 \pi}} e^{-q_1(x)} \left(\delta_1 e^{-\beta_1^2/2} + \sum_{i=2}^K \delta_ie^{-\beta_i^2/2}  e^{q_1(x)-q_i(x)} \right)
\end{align*}
has the property that the polynomial $q_1 - q_i$ has the leading (non-zero) coefficient {\it negative} for each $i = 2, \dots, K$: therefore, for each $i = 2, \dots, K$,
there exists $x_i$ such that $\left| \delta_i  e^{q_1(x)-q_i(x)}  \right| \le \frac{1}{2K} \left|  \delta_1\right|$ for all $x > x_i$.
Setting $x_0 = \max(x_2, \dots, x_K)$ we obtain the claim.

Now we claim that it is possible to find $y_0 < 0$ such that, for all $x < y_0$, $f(x) \not= 0$. The proof is the same as before, except that we shall modify the ordering of the polynomials to take into account the correct behaviour of the first order term. Actually we choose
\par\smallskip\noindent\begin{minipage}{\textwidth}
\begin{itemize}
\item $\alpha_i < \alpha_{i+1}$, or 
\item $\alpha_i = \alpha_{i+1}$ and $\beta_i > \beta_{i+1}$:
\end{itemize}
\end{minipage}\par\smallskip\noindent
then the proof follows with the same argument as above.
\end{proof}

To conclude this section, we recall that the integral of the Gaussian cumulative distribution function can be computed explicitly in terms of the Gaussian distribution itself.
The starting point is the following well known identity:
\begin{align}\label{eq*}
\int_{-\infty}^x \Phi(y) \, {\rm d}y = x \Phi(x) + \phi(x),
\end{align}
where $\Phi$ is the cumulative distribution function and $\phi$ the probability density function for the standard Gaussian distribution.
Then, from \eqref{eq*} it follows that
\begin{multline*}
\int_a^b \Phi(\alpha x + \beta) \, {\rm d}x =
\int_{\alpha a + \beta}^{\alpha b + \beta} \frac{1}{\alpha} \Phi(y) \, {\rm d}y 
\\
= \left( b + \frac{\beta}{\alpha} \right) \Phi \left(\alpha b + {\beta} \right) 
-  \left( a + \frac{\beta}{\alpha} \right)  \Phi \left( \alpha a + \beta \right) 
+ \frac{1}{\alpha} \left[ \phi\left( \alpha b + \beta \right)  - \phi\left( \alpha a + \beta \right)  \right].
\end{multline*}

\section{Conclusion and further research}

In this paper we explored finite mixture models and we concentrated our effort on two particular subclasses of \eqref{e1}, Gaussian mixtures and EPT functions, that have more relevance in probability. We hope to return to the general treatment of general mixtures defined in \eqref{e1} in a following paper.
\\
Also, our results are specifically stated in one space dimension. In several applications, this is quite restrictive, and the extension of our results to the $d$-dimensional case is an important topic for further research.


\bigskip

\appendix
\section{Technical results and proofs}

\subsection{Some results in bordered matrices}

Let 
\begin{align*}
M = \begin{pmatrix} \tilde M & c_1 & c_2 \\ b_1^T & a_1 & a_2 \\ b_2^T & a_3 & a_4 \end{pmatrix}
\end{align*}
be a square $n \times n$ matrix, partitioned in such a way that
\begin{itemize}
\item $\tilde M$ is a $(n-2) \times (n-2)$ square matrix,
\item $c_1$, $c_2$, $b_1$, $b_2$ are column vector of length $n-2$, and
\item $a_1, \dots, a_4$ are numbers.
\end{itemize}
Then we can compute the determinant of $M$
and we have the following rule (see e.g.\ \cite{Karapiperi2015})
\begin{equation}
\label{eq:detM.detQ}
{\rm det}(M) \, {\rm det}(\tilde M) = \begin{vmatrix} \begin{vmatrix} \tilde M & c_1 \\ b_1^T & a_1 \end{vmatrix} &
\begin{vmatrix} \tilde M & c_2 \\ b_1^T & a_2 \end{vmatrix} \\ \\
\begin{vmatrix} \tilde M & c_1 \\ b_2^T & a_3 \end{vmatrix} & \begin{vmatrix} \tilde M & c_2 \\ b_2^T & a_4 \end{vmatrix} 
\end{vmatrix}
\end{equation}
and, if we set
\begin{align*}
A' = \begin{pmatrix} \tilde M & c_1 \\ b_1^T & a_1 \end{pmatrix},
\qquad 
B' = \begin{pmatrix} \tilde M & c_2 \\ b_1^T & a_2 \end{pmatrix}, \qquad
C' = \begin{pmatrix} \tilde M & c_1 \\ b_2^T & a_3 \end{pmatrix}, \qquad
D' = \begin{pmatrix} \tilde M & c_2 \\ b_2^T & a_4 \end{pmatrix} 
\end{align*}
we obtain
\begin{align}
\label{eq:detM.detQ-bis}
{\rm det}(M) \, {\rm det}(\tilde M) = {\rm det}(A') \, {\rm det}(D') - {\rm det}(B') \, {\rm det}(C').
\end{align}

\subsection{Proof of  \cref{th:tp1}}\label{sec:app-proof-tp1}

Relation \eqref{eq:Q1} follows directly from the definition
\begin{align*}
Q^1_{i,k} =  \begin{vmatrix} 1 & 1  \\ P_{1,i} & P_{1,k} \end{vmatrix}
\end{align*}
and the definition \eqref{eq:phi}.

Next we consider the case of $j > 2$.
With a slight abuse of notation, we denote by
\begin{align*}
Q^{j-1}_{i_1, \dots, i_j}(y_1, \dots, y_j)
\end{align*}
the determinant in \eqref{eq:Q} where in each column we consider a different variable $y_i$. 
Notice that we recover the polynomial in $x$ when we let $y_1 = \dots = y_j = x$:
\begin{align*}
Q^{j-1}_{i_1, \dots, i_j}(x) = Q^{j-1}_{i_1, \dots, i_j}(x, \dots, x).
\end{align*}

Let $D_i$ denote the differentiation with respect to the variable $y_i$.
Then the following formula holds (chain rule)
\begin{align}
\label{eq:derivate.uguali}
D_x Q^{j-1}_{i_1, \dots, i_j}(x) = [D_{1} + \dots + D_j]Q^{j-1}_{i_1, \dots, i_j}(x, \dots, x).
\end{align}

Finally, we introduce the notation
\begin{align*}
\Delta_j = \phi_j + D_j,
\end{align*}
where $\phi_j$, defined in \eqref{eq:phi},  is taken as a multiplication operator.
By the results in  \cref{rem:hermite} we know that
\begin{align*}
\Delta_j P_{m,j} = P_{m+1,j}
\end{align*}
and
\begin{equation}\label{eq:derivata}
\left[\sum_{i=1}^j \Delta_i \right] Q^{j-1}_{i_1, \dots, i_j} = 
\begin{vmatrix} 1 & 1 & \dots & 1 \\ P_{1,i_1} & P_{1,i_2} & \dots & P_{1,i_j} \\ & \dots & & \dots
\\ P_{j-2, i_1} & P_{j-2, i_2} & \dots & P_{j-2, i_j} 
\\ P_{j, i_1} & P_{j, i_2} & \dots & P_{j, i_j}.
\end{vmatrix}
\end{equation}

Now, let us write $Q^{j}_{1, \dots, j, k}$ as the determinant of a bordered matrix in the following form
\begin{align*}
Q^{j}_{1, \dots, j, k}
=  
\left|\begin{array}{ccc|c|c} 
1 & \dots & 1 & 1 & 1 
\\ 
P_{1,1} & \dots & P_{1, j-1}  & P_{1,j} & P_{1,k}
\\
& \dots & & & \dots
\\ 
P_{j-2, 1} & \dots & P_{j-2, j-1} & P_{j-2,j} & P_{j-2, k} 
\\
\hline
P_{j-1, 1} & \dots & P_{j-1,j-1} & P_{j-1,j} & P_{j-1, k} 
\\
\hline
P_{j, 1} & \dots & P_{j,j-1} & P_{j,j} & P_{j, k} 
\end{array}\right|.
\end{align*}
Our aim is to apply formula \eqref{eq:detM.detQ-bis}.
It is clear that
\begin{align*}
{\rm det}(M) = Q^{j}_{1, \dots, j, k}
\end{align*}
as well as
\begin{align*}
{\rm det}(\tilde M) = Q^{j-2}_{1, \dots, j-1};
\end{align*}
it remains to identify the other terms.
Recall the definition of matrices $A', B', C'$ and $D'$ from previous subsection.
We have 
\begin{align*}
A' &=
\left|\begin{array}{ccc|c} 
1 & \dots & 1 & 1 
\\ 
P_{1,1} & \dots & P_{1, j-1}  & P_{1,j} 
\\
& \dots  & & \dots
\\ 
P_{j-2, 1} & \dots & P_{j-2, j-1} & P_{j-2,j} 
\\
\hline
P_{j-1, 1} & \dots & P_{j-1,j-1} & P_{j-1,j} 
\end{array}\right],
\qquad 
B' =
\left|\begin{array}{ccc|c} 
1 & \dots & 1 & 1 
\\ 
P_{1,1} & \dots & P_{1, j-1}  & P_{1,k} 
\\
& \dots  & & \dots
\\ 
P_{j-2, 1} & \dots & P_{j-2, j-1} & P_{j-2,k} 
\\
\hline
P_{j-1, 1} & \dots & P_{j-1,j-1} & P_{j-1,k} 
\end{array}\right]
\\
C' &= 
\left|\begin{array}{ccc|c} 
1 & \dots & 1 & 1 
\\ 
P_{1,1} & \dots & P_{1, j-1}  & P_{1,j} 
\\
& \dots  & & \dots
\\ 
P_{j-2, 1} & \dots & P_{j-2, j-1} & P_{j-2,j} 
\\
\hline
P_{j, 1} & \dots & P_{j,j-1} & P_{j,j} 
\end{array}\right],
\qquad
D' = 
\left|\begin{array}{ccc|c} 
1 & \dots & 1 & 1 
\\ 
P_{1,1} & \dots & P_{1, j-1}  & P_{1,k} 
\\
& \dots  & & \dots
\\ 
P_{j-2, 1} & \dots & P_{j-2, j-1} & P_{j-2,k} 
\\
\hline
P_{j, 1} & \dots & P_{j,j-1} & P_{j,k} 
\end{array}\right]
\end{align*}
hence 
\begin{align*}
{\rm det}(A') = Q^{j-1}_{1, \dots, j}, \qquad
{\rm det}(B') = Q^{j-1}_{1, \dots, j-1, k}.
\end{align*}
By comparing with \eqref{eq:derivata} we see that
\begin{align*}
{\rm det}(C') = \left[\sum_{i=1}^j \Delta_i \right] Q^{j-1}_{1, \dots, j};
, \qquad 
{\rm det}(D') = \left[\sum_{i=1}^{j-1} \Delta_i + \Delta_k \right] Q^{j-1}_{1, \dots, j-1, k}.
\end{align*}
By taking $y_1 = \dots = y_{j} = y_k = x$ and recalling \eqref{eq:derivate.uguali} we finally obtain
\begin{multline*}
Q^{j}_{1, \dots, j, k} = \frac{1}{Q^{j-2}_{1, \dots, j-1}} \left( Q^{j-1}_{1, \dots, j} \, \left[ \left( \sum_{i=1}^{j-1} \phi_i + \phi_k \right) + D_x \right] Q^{j-1}_{1, \dots, j-1, l} 
\right.
\\
\left.
- Q^{j-1}_{1, \dots, j-1, k}  \, \left[ \left( \sum_{i=1}^{j-1} \phi_i + \phi_j \right) + D_x \right] Q^{j-1}_{1, \dots, j} \right)
\end{multline*}
and simplifying the relevant terms
we obtain the thesis.
\hfill$\blacksquare$

\subsection{Proof of \cref{firstresult}}\label{sec:app-proof-first-result}

We recall from \eqref{eq:gbf-seq1} that using the Polya-Ristroph formula we have the GBF sequence
\begin{align*}
 \Psi_{j}= \dfrac{W_j}{W_{j-1}} D \left(  \dfrac{W_{j-1}}{W_j} \Psi_{j-1} \right) \quad \; \forall \; 1 \leq j \leq N,
\end{align*} 
where for simplicity we let $W_0 = 1$;
this formula can be rewritten as
\begin{align*}
\Psi_j =& \dfrac{W_j}{W_{j-1}} \left( \dfrac{W_j \, D W_{j-1}  - W_{j-1} \, D W_j  }{(W_j)^2} \Psi_{j-1} + \dfrac{W_{j-1}}{W_j}D\Psi_{j-1}  \right)
\\
=& D\Psi_{j-1} - \left(\dfrac{DW_j}{W_j} - \dfrac{DW_{j-1}}{W_{j-1}} \right)\Psi_{j-1}.
\end{align*}
Now, using \eqref{eq:determ} we have that
\begin{align*}
\dfrac{DW_j}{W_j} =
\sum_{i=1}^j P_{1,i} + \frac{D Q^{j-1}_{1, \dots, j}}{Q^{j-1}_{1, \dots, j}}, \qquad 1 \le j \le N,
\end{align*}
which implies that we can introduce the rational functions
\begin{align*}
s_1 \coloneqq P_{1,1}
\end{align*}
and, for $2 \le j \le N$,
\begin{align*}
s_j \coloneqq& \dfrac{DW_j}{W_j} - \dfrac{DW_{j-1}}{W_{j-1}} =
\frac{D Q^{j-1}_{1, \dots, j}}{Q^{j-1}_{1, \dots, j}}
- \frac{D Q^{j-2}_{1, \dots, j-1}}{Q^{j-2}_{1, \dots, j-1}}
+ P_{1,j}
= \frac{D \left( \frac{Q^{j-1}_{1, \dots, j}}{Q^{j-2}_{1, \dots, j-1}} \right)}{ \frac{Q^{j-1}_{1, \dots, j}}{Q^{j-2}_{1, \dots, j-1}} } + P_{1,j}
\end{align*}
and  $\Psi_j$ can be written as 
\begin{align*}
\Psi_j(x) = (D - s_j(x))\Psi_{j-1}(x).
\end{align*}

Recall that
\begin{align*}
\Psi_0(x) = \sum_{k=1}^N \lambda_k h_k(x),
\end{align*}
which implies that (compare with \eqref{eq:phi})
\begin{equation*}
\Psi_1(x) = (D - s_1(x)) \Psi_0(x) = \lambda_1 (D - P_{1,1})h_1 + \sum_{k=2}^N \lambda_k (D - P_{1,1}) h_k(x)
= \sum_{k=2}^N \lambda_k (P_{1,k} - P_{1,1}) h_k(x)
\end{equation*}
and recalling the definition of $Q^1_{1,k}$ in \eqref{eq:Q1} we obtain
\begin{align*}
\Psi_1(x) = \sum_{k=2}^N \lambda_k Q^1_{1,k} h_k(x)
\end{align*}
in accordance with \eqref{eq:recursive-form}.

Now we proceed by recursion.
Assume that for $2 \le j \le N$ it holds
\begin{align*}
\Psi_{j-1}(x) = \sum_{k=j}^N \lambda_k \frac{Q^{j-1}_{1, \dots, j-1, k} }{ Q^{j-2}_{1, \dots, j-1} } h_k(x).
\end{align*}
Then
\begin{align*}
\Psi_j(x) =& (D - s_j(x))\Psi_{j-1}(x) = \sum_{k=j}^N \lambda_k \left( D \frac{Q^{j-1}_{1, \dots, j-1, k} }{ Q^{j-2}_{1, \dots, j-1} } \right) h_k(x)
\\
&+ \sum_{k=j}^N \lambda_k \frac{Q^{j-1}_{1, \dots, j-1, k} }{ Q^{j-2}_{1, \dots, j-1} } D h_k(x)
- s_j \sum_{k=j}^N \lambda_k \frac{Q^{j-1}_{1, \dots, j-1, k} }{ Q^{j-2}_{1, \dots, j-1} } h_k(x).
\end{align*}
Let us denote
\begin{align*}
\Lambda^{j-1}_{k} = \frac{Q^{j-1}_{1, \dots, j-1, k} }{ Q^{j-2}_{1, \dots, j-1} }, \qquad k \ge j;
\end{align*}
then we have
\begin{align*}
\Psi_j(x) =& \sum_{k=j}^N \lambda_k \left( D \Lambda^{j-1}_{k} \right) h_k(x)
+ \sum_{k=j}^N \lambda_k \Lambda^{j-1}_{k} \, P_{1,k} \, h_k(x)
\\
&- \frac{ D \Lambda^{j-1}_{j}}{\Lambda^{j-1}_{j}} \sum_{k=j}^N \lambda_k \Lambda^{j-1}_{k} h_k(x)
- P_{1,j} \sum_{k=j}^N \lambda_k \Lambda^{j-1}_{k} h_k(x)
\\
=& \sum_{k=j+1}^N \lambda_k \left( D \Lambda^{j-1}_{k} - \frac{ D \Lambda^{j-1}_{j}}{\Lambda^{j-1}_{j}} \Lambda^{j-1}_{k}\right) h_k(x)
\\
&+ \sum_{k=j+1}^N \lambda_k \Lambda^{j-1}_{k} \, \left( P_{1,k}- P_{1,j} \right) \, h_k(x)
\\
=& \sum_{k=j+1}^N \lambda_k \, \Lambda^{j-1}_{k} \left( \frac{ D \Lambda^{j-1}_{k} }{ \Lambda^{j-1}_{k}} - \frac{ D \Lambda^{j-1}_{j}}{\Lambda^{j-1}_{j}} + \left( P_{1,k}- P_{1,j} \right) \right) h_k(x).
\end{align*}
In order to prove \eqref{eq:recursive-form} it is sufficient to prove that for any $k \ge j+1 $ it holds
\begin{align}\label{eq:to-prove}
\Lambda^{j-1}_{k} \left( \frac{ D \Lambda^{j-1}_{k} }{ \Lambda^{j-1}_{k}} - \frac{ D \Lambda^{j-1}_{j}}{\Lambda^{j-1}_{j}} + \left( P_{1,k}- P_{1,j} \right) \right)
= \frac{Q^{j}_{1, \dots, j, k} }{ Q^{j-1}_{1, \dots, j} }.
\end{align}
We will repeatedly use the identity 
\begin{align*}
\frac{Df}{f} - \frac{Dg}{g} = \frac{D(f/g)}{f/g}
\end{align*}
to compute the left hand side of the claim \eqref{eq:to-prove}: we get
\begin{align*}
\frac{Q^{j-1}_{1, \dots, j-1, k} }{ Q^{j-2}_{1, \dots, j-1} }&
\left( \frac{ D \frac{Q^{j-1}_{1, \dots, j-1, k} }{ Q^{j-2}_{1, \dots, j-1} } }{ \frac{Q^{j-1}_{1, \dots, j-1, k} }{ Q^{j-2}_{1, \dots, j-1} } } - \frac{ D \frac{Q^{j-1}_{1, \dots, j-1, j} }{ Q^{j-2}_{1, \dots, j-1} } }{ \frac{Q^{j-1}_{1, \dots, j-1, j} }{ Q^{j-2}_{1, \dots, j-1} } } + \left( P_{1,k}- P_{1,j} \right) \right)
\\
=&
\frac{Q^{j-1}_{1, \dots, j-1, k} }{ Q^{j-2}_{1, \dots, j-1} }
\left( \frac{ D \frac{Q^{j-1}_{1, \dots, j-1, k} }{ Q^{j-1}_{1, \dots, j-1, j} } }{ \frac{Q^{j-1}_{1, \dots, j-1, k} }{ Q^{j-1}_{1, \dots, j-1, j} } }  + \left( P_{1,k}- P_{1,j} \right) \right)
\\
=&
\frac{Q^{j-1}_{1, \dots, j-1, k} }{ Q^{j-2}_{1, \dots, j-1} }
\left(  \frac{D Q^{j-1}_{1, \dots, j-1, k} }{ Q^{j-1}_{1, \dots, j-1, k} } -  \frac{ DQ^{j-1}_{1, \dots, j-1, j} }{ Q^{j-1}_{1, \dots, j-1, j} } 
+ \left( P_{1,k}- P_{1,j} \right) \right)
\end{align*}
recalling \eqref{eq:Q2}, the quantity within bracket equals to 
\begin{align*}
\frac{ Q^{j}_{1, \dots, j, k} \, Q^{j-2}_{1, \dots, j-1} }{ Q^{j-1}_{1, \dots, j-1, k} \, Q^{j-1}_{1, \dots, j-1, j}  }
\end{align*}
which allows to prove \eqref{eq:to-prove}.
\hfill$\blacksquare$

\smallskip

\section{Examples}

\subsection{An example for Polynomial-Gaussian mixtures}\label{sec:ex-pgm}

In this example, we study the behaviour of the sum of two polynomial-Gaussian functions $f(x) = h_1(x) + \alpha h_2(x)$, with $\alpha \in \mathbb{R}$.
In order to provide a complete example, we fix
\begin{align*}
    h_1(x) = (x^2 + 1) e^{-x^2}, \qquad h_2(x) = (x^2 + 4x) e^{-(x-1)^2/2}
\end{align*}
so that $\mu_1 = 0$, $\sigma^2_1 = \frac12$, $\mu_2 = 1$, $\sigma^2_2 = 1$.

\begin{figure}[h!tbp]
    \centering
\begin{tikzpicture}

\draw[pattern=north west lines, pattern color=black!50] (-5, 0.90) node[anchor=east]{\footnotesize{0.8955}} rectangle  (-3.57, 3);
\draw[pattern=north east lines, pattern color=black!50] (-3.57, 0.90) rectangle  (-1.73, 2.11);
\draw[pattern=north west lines, pattern color=black!50] (-1.73, 2.11) rectangle  (-0.64, 1.67);
\draw[pattern=north east lines, pattern color=black!50] (-0.64, 1.67) rectangle  (5, 3);
\draw[pattern=north east lines, pattern color=black!50] (-5, -3) rectangle  (-3.57, 0);
\draw[pattern=north west lines, pattern color=black!50] (-0.64, -3) rectangle  (5, 0);
\draw [dashed, black!25] (-5, -3) rectangle  (5, 0);
\draw [dashed, black!25] (-5, 0.90) rectangle  (5, 1.67);
\draw [dashed, black!25] (-5, 2.11) node[anchor=east, black]{\footnotesize{2.1173}} rectangle  (5, 3);
\draw[thick, red](-5, -3) node[anchor=east, black]{\footnotesize{-3}}  -- (-5, 0);
\draw[thick, blue](-5, 0) node[anchor=east, black]{\footnotesize{0}} -- (-5, 3) node[anchor=east, black]{\footnotesize{$\alpha$}};
\draw[thick, red](5, -3) -- (5, 0);
\draw[thick, blue](5, 0) -- (5, 3);
\draw[thick, blue](-3.57, -3) -- (-3.57, 0.90);
\draw[thick, red](-3.57, 0.90) -- (-3.57, 3);
\draw[thick, blue](-1.73, -3) -- (-1.73, 2.11);
\draw[thick, red](-1.73, 2.11) -- (-1.73, 3);
\draw[thick, blue](-0.64, -3) -- (-0.64, 1.67);
\draw[thick, red](-0.64, 1.67) -- (-0.64, 3);
\node[anchor=east, black] at (-5, 1.67){\footnotesize{1.6700}};
\node[anchor= north, black] at (-5, -3){\footnotesize{$a = -5$}};
\node[anchor= north, black] at (5, -3){\footnotesize{$b = 5$}};
\node[anchor= north, black] at (-3.57, -3){\footnotesize{$x_1 = -3.57$}};
\node[anchor= north, black] at (-1.73, -3){\footnotesize{$x_2 = -1.73$}};
\node[anchor= north west, black] at (-0.74, -3){\footnotesize{$x_3 = -0.64$}};

\end{tikzpicture}
    \caption{Analysis of the grid for the function $f(x)$. The parameter $\alpha$ varies in $(-3, 3)$ and $x$ in $(-5,5)$. The orientation of the lines shows if the function is increasing or decreasing at the zero. The blue line implies that the function takes a positive value in the point of the grid, red that it is negative. No further zeros exist outside the interval $[a,b]$. Notice that for $0 < \alpha < 0.8955$ the function $f(x)$ is nonnegative on the whole real line.}
    \label{fig:1}
\end{figure}
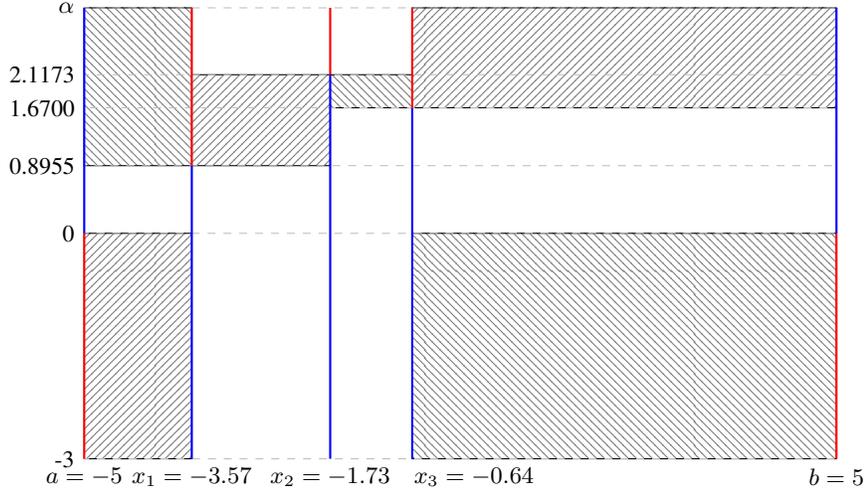

The first step 
is the choice of the interval $[a,b]$. Since the second term has larger variance, it is the dominant term for $|x| \to \infty$, hence for $\alpha > 0$ we have $f(x) > 0$ for $|x| \to \infty$, and conversely for $\alpha < 0$. Therefore, we can always choose $a \ll 0$ and $b \gg 0$ such that $f(x)$
has constant sign (equal to $\mathop{\rm sgn}(\alpha)$) outside the interval $[a,b]$.
As $\alpha$ varies in $(-10,10)$, for instance, it is sufficient to take $a \le -5$, $b \ge 5$.

Next step is the computation of the function $\psi_1(x)$. Notice that this function (and, a fortiori, the simple grid it provides for $f(x)$) is independent of the constant $\alpha$.
We obtain
\begin{align*}
    \psi_1(x) = h_{1,2}(x) = D h_2(x) - h_2(x) \frac{D h_1(x)}{h_1(x)} = \frac{1}{1+x^2} e^{-(x-1)^2/2} (4 + 6 x + x^2 + 5 x^3 + 5 x^4 + x^5)
\end{align*}
so that a simple grid for $f(x)$ is given by the real sign-changing zeros of the polynomial
\begin{align*}
    p_{1,2}(x) = 4 + 6 x + x^2 + 5 x^3 + 5 x^4 + x^5,
\end{align*}
and we obtain the following grid
\begin{align*}
    G = \{x_1 = -3.57116, x_2 = -1.72866, x_3 = -0.638509 \}.
\end{align*}
Finally, the existence of sign-changing zeros for $f(x)$ can be determined by computing the values of $f(x)$ in the points of $G \cup \{a, b\}$,
which turns out to depend linearly on $\alpha$.
Thus, we are able to construct the table in \cref{fig:1}, where the analysis of the grid (and the presence of sign-changing zeros) is shown.

According to the table, for $\alpha = 0.5$ no sign-changing zeros are present for the function $f_{0.5}(x) = h_1(x) + \frac12 h_2(x)$, while for $\alpha = 2$ there exist 4 sign-changing zeros for the function $f_{2}(x) = h_1(x) + 2 h_2(x)$. Their plots are given in \cref{fig:2}.

\begin{figure}[h!tbp]
  \centering\begin{minipage}[b]{0.45\textwidth}
    \includegraphics[width=\textwidth]{./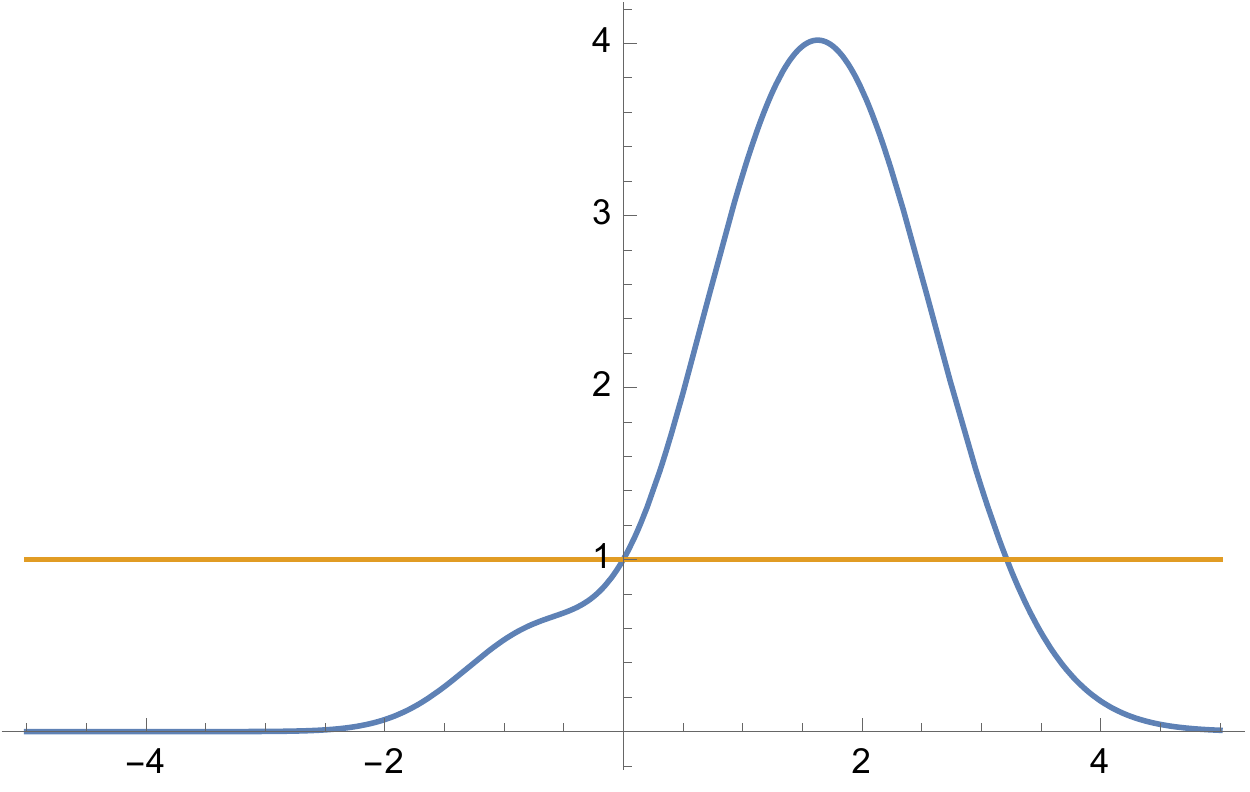}
  \end{minipage}
  \hfill
  \begin{minipage}[b]{0.45\textwidth}
    \includegraphics[width=\textwidth]{./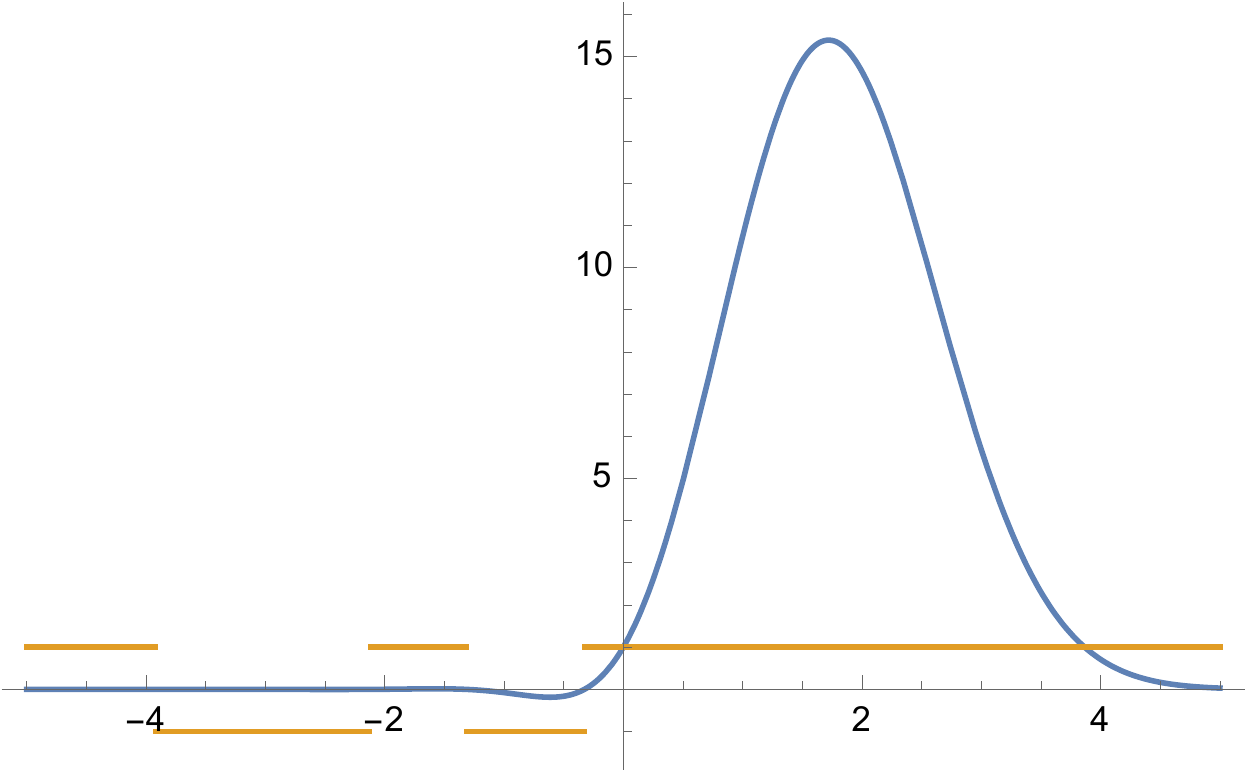}
  \end{minipage}
   \caption{Graph of the functions $f_{0.5}(x)$ (on the left) and $f_2(x)$ (on the right). The orange line represents the sign of the function, so to help capture the sign-changes.}
   \label{fig:2}
   \end{figure}

\subsection{An example for finite Gaussian mixtures}\label{ssec:5-example}
In this example we show how the GBF algorithm computes the sign-changing roots of a Gaussian mixture with parameters
\begin{itemize}
	\item $\mathtt{n=4};$
	\item $\mathtt{mu = [1,2,3,4]};$
	\item $\mathtt{s = [25,0.04,0.04,0.04]};$
	\item $\mathtt{lambda = [4,-1,-1,-1]}.$
	\end{itemize}
and accuracy level for Ridders' method set equal to $\mathtt{eps = 2.2204e-16}$.
The main steps executed by the software are described as follows: 
\begin{itemize}
	\item Step 1: Computation of the GBF sequence;
	\item Step 2: Computation of the bounded interval $I$ which contains all the eventual sign-changing roots.
	\item Step 3: Iterative backward scanning of the GBF sequence embedded with the computation of grids and sign-changing roots for all levels.
\end{itemize}
We mention that the implemented version of the GBF algorithm was slightly modified from its theoretical version to reduce execution times of the sign-changing roots algorithm. In fact, in Step 3 all points stemming from grids at higher levels are included in the grids for lower ones to reduce the lengths of the intervals in which Ridders' method is applied.

In the current example, the bounded interval was found to be equal to $I =[1.4921, 4.5267]$. 
The GBF functions along with their grids and sign-changing roots are displayed in \cref{fig:GBF}.

\begin{figure}[h!tbp]
	\centering
	\subfloat{
		\includegraphics[width=.45\textwidth]{./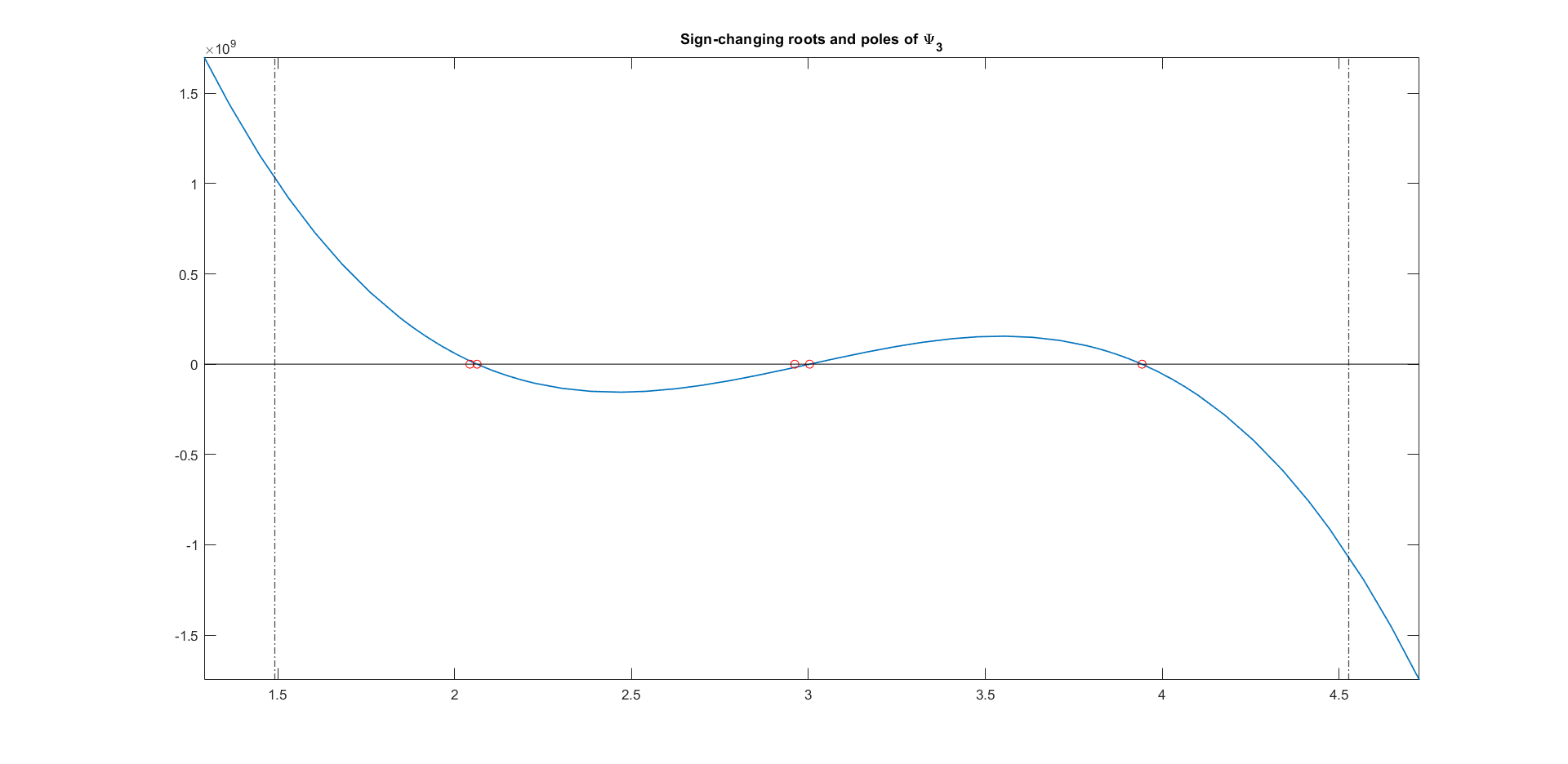}
}
	\subfloat{
		\includegraphics[width=.45\textwidth]{./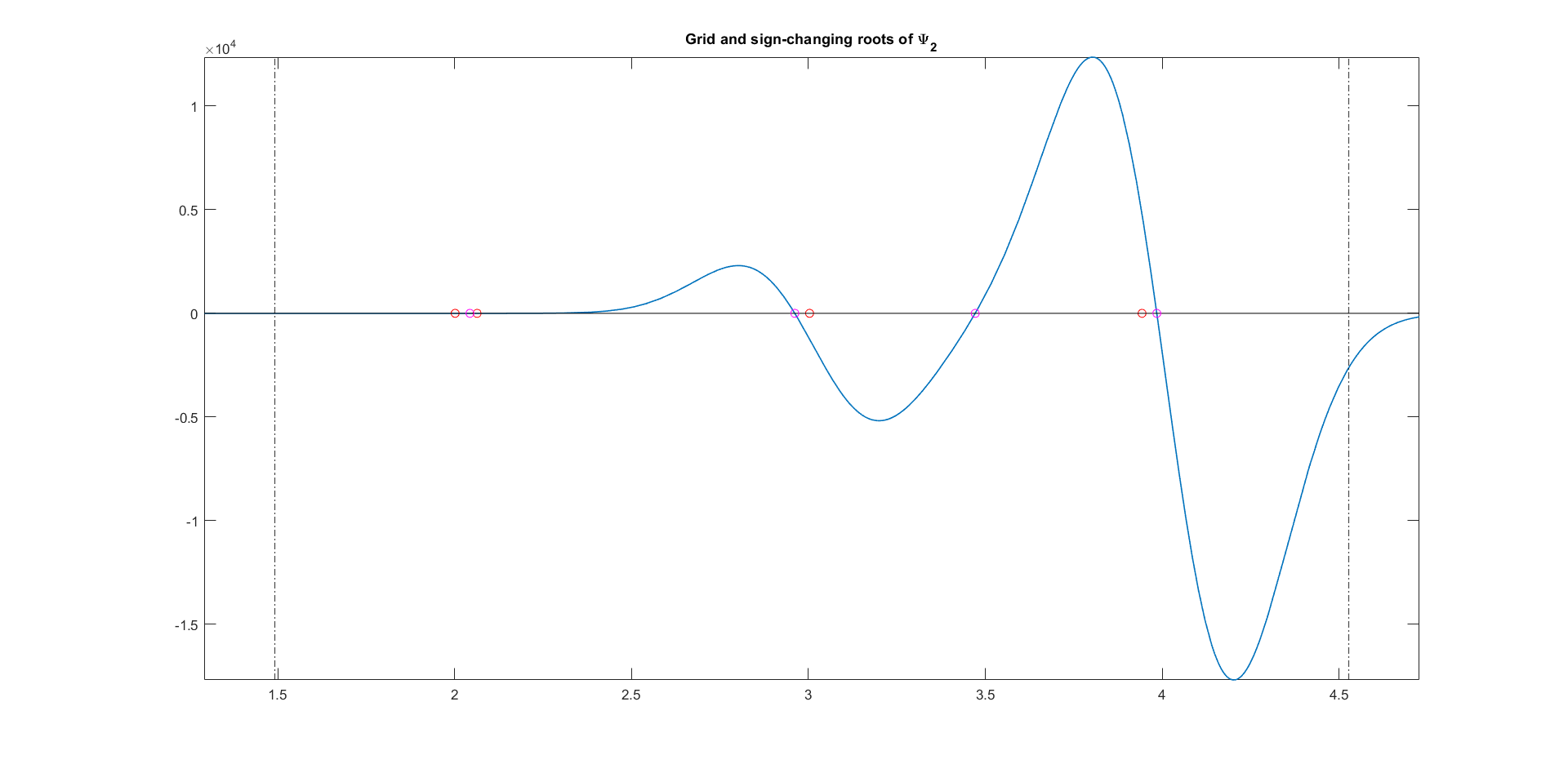}
}
\caption{GBF sequence of the Gaussian mixture discussed in  \cref{ssec:5-example}. Grids are represented in red, and sign-changing roots are represented in purple.
These plots represent the functions $\psi_3$ (on the left) and $\psi_2$ (on the right).}%
\label{fig:GBF}%
\end{figure}

\begin{figure}[h!tbp]%
\centering
\subfloat{
		\includegraphics[width=.45\textwidth]{./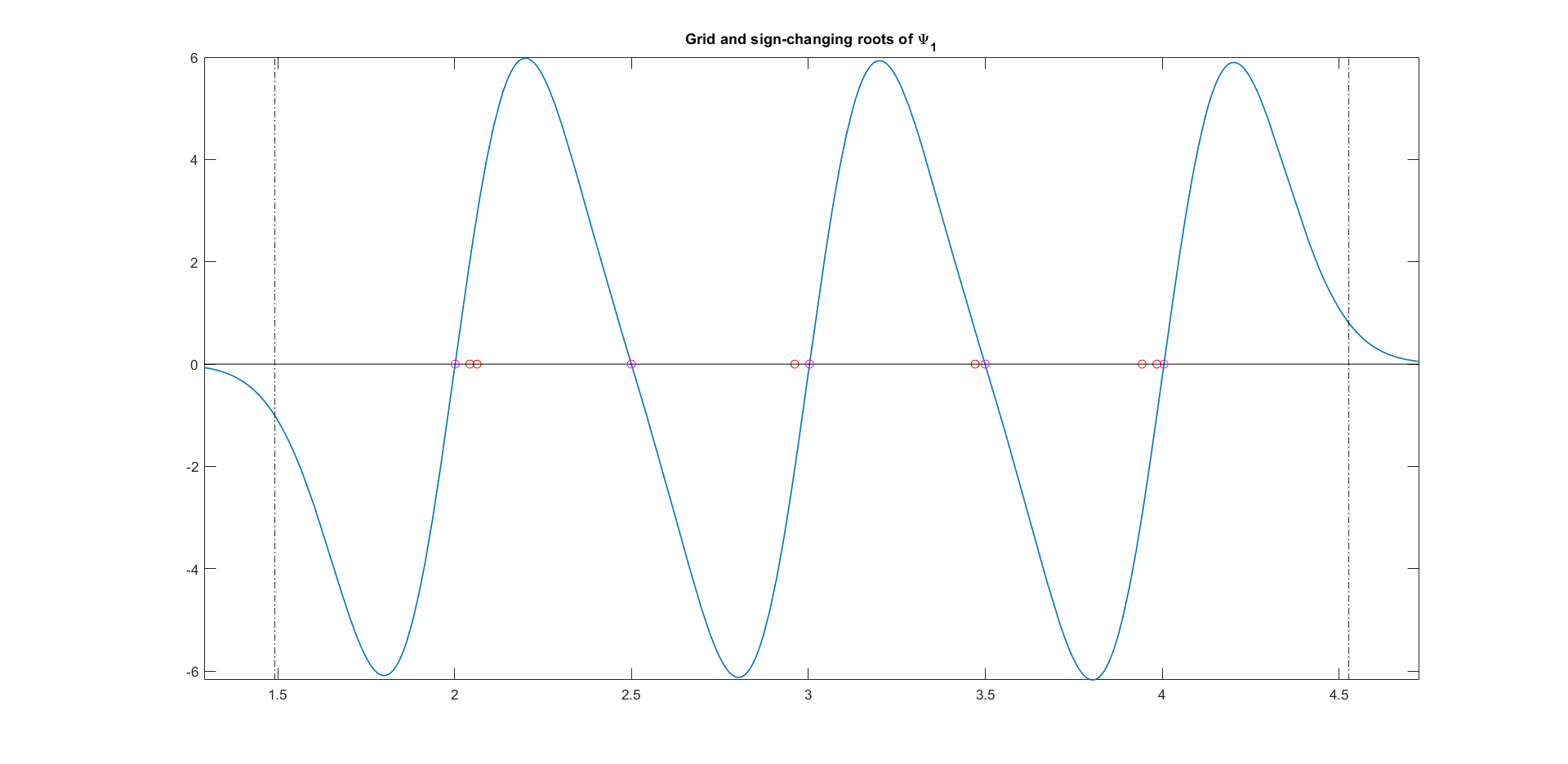}
}
	\subfloat{
		\includegraphics[width=.45\textwidth]{./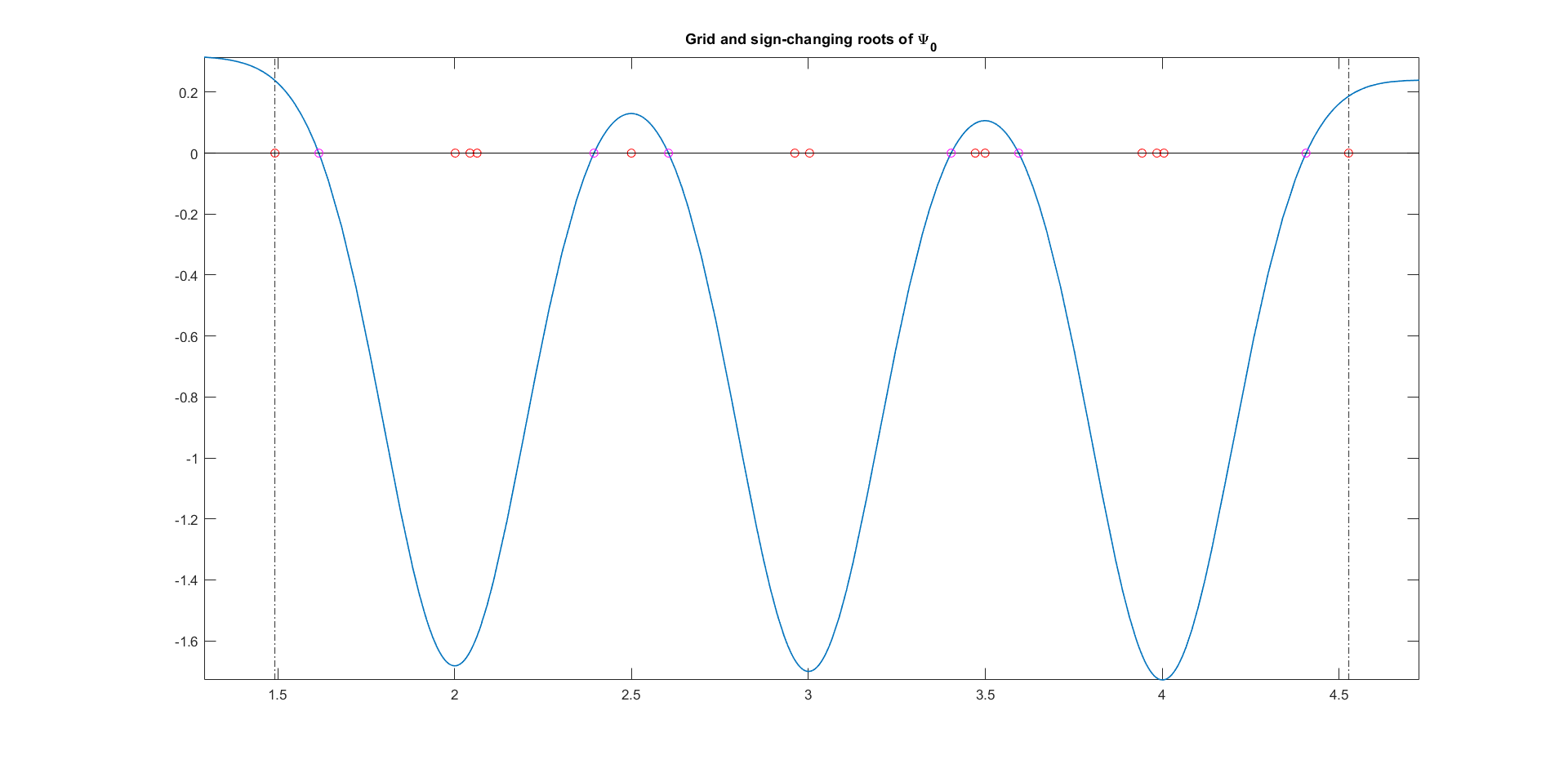}
}\caption[]{GBF sequence of the Gaussian mixture discussed in  \cref{ssec:5-example}. 
These plots represent the functions $\psi_1$ (on the left) and $f = \psi_0$ (on the right).}%
\end{figure}	

\newpage


\begin{thebibliography}{10}

\bibitem{Ambrosio2008}
Luigi Ambrosio, Nicola Gigli, and Giuseppe Savar\'{e}.
\newblock {\em Gradient flows in metric spaces and in the space of probability
  measures}.
\newblock Lectures in Mathematics ETH Z\"{u}rich. Birkh\"{a}user Verlag, Basel,
  second edition, 2008.

\bibitem{Aquilanti2021}
Laura Aquilanti, Simone Cacace, Fabio Camilli, and Raul De~Maio.
\newblock A mean field games approach to cluster analysis.
\newblock {\em Appl. Math. Optim.}, 84(1):299--323, 2021.

\bibitem{Aquilanti2020}
Laura Aquilanti, Simone Cacace, Fabio Camilli, and Raul De~Maio.
\newblock A mean field games model for finite mixtures of {B}ernoulli and
  categorical distributions.
\newblock {\em J. Dyn. Games}, 8(1):35--59, 2021.

\bibitem{Coste2005}
Michel Coste, Tom{\'a}s Lajous-Loaeza, Henri Lombardi, and Marie-Fran{\c c}oise
  Roy.
\newblock Generalized budan--fourier theorem and virtual roots.
\newblock {\em Journal of Complexity}, 21(4):479--486, 2005.
\newblock Festschrift for the 70th Birthday of Arnold Schonhage.

\bibitem{DallAglio1956}
Giorgio Dall'Aglio.
\newblock Sugli estremi dei momenti delle funzioni di ripartizione doppia.
\newblock {\em Ann. Scuola Norm. Sup. Pisa Cl. Sci. (3)}, 10:35--74, 1956.

\bibitem{Dudley1976}
R.~M. Dudley.
\newblock {\em Probabilities and metrics}.
\newblock Matematisk Institut, Aarhus Universitet, Aarhus, 1976.
\newblock Convergence of laws on metric spaces, with a view to statistical
  testing, Lecture Notes Series, No. 45.

\bibitem{Embrechts2013}
Paul Embrechts and Marius Hofert.
\newblock A note on generalized inverses.
\newblock {\em Math. Methods Oper. Res.}, 77(3):423--432, 2013.

\bibitem{Everitt2011}
Brian~S. Everitt, Sabine Landau, Morven Leese, and Daniel Stahl.
\newblock {\em Cluster analysis}.
\newblock Wiley Series in Probability and Statistics. John Wiley \& Sons, Ltd.,
  Chichester, fifth edition, 2011.

\bibitem{Fruhwirth2006}
Sylvia Fr\"{u}hwirth-Schnatter.
\newblock {\em Finite mixture and {M}arkov switching models}.
\newblock Springer Series in Statistics. Springer, New York, 2006.

\bibitem{Galligo2013}
Andr{\'e} Galligo.
\newblock Budan tables of real univariate polynomials.
\newblock {\em Journal of Symbolic Computation}, 53:64--80, 2013.

\bibitem{Gonzalez1998}
Laureano Gonzalez-Vega, Henri Lombardi, and Louis Mah{\'e}.
\newblock Virtual roots of real polynomials.
\newblock {\em Journal of Pure and Applied Algebra}, 124(1-3):147 -- 166, 1998.
\newblock Cited by: 10; All Open Access, Bronze Open Access, Green Open Access.

\bibitem{Hanzon2012}
Bernard Hanzon and Finbarr Holland.
\newblock Non-negativity analysis for exponential-polynomial-trigonometric
  functions on {$[0,\infty)$}.
\newblock In {\em Spectral theory, mathematical system theory, evolution
  equations, differential and difference equations}, volume 221 of {\em Oper.
  Theory Adv. Appl.}, pages 399--412. Birkh\"{a}user/Springer Basel AG, Basel,
  2012.

\bibitem{Karapiperi2015}
Anna Karapiperi, Michela Redivo-Zaglia, and Maria~Rosaria Russo.
\newblock Generalizations of sylvester's determinantal identity, 2015.

\bibitem{Kasyanov2022}
Sergey Kasyanov.
\newblock {GaussElimination}, 2022.
\newblock {MATLAB Central File Exchange. Version 1.0.0}.

\bibitem{McLachlan2000}
Geoffrey McLachlan and David Peel.
\newblock {\em Finite mixture models}.
\newblock Wiley Series in Probability and Statistics: Applied Probability and
  Statistics. Wiley-Interscience, New York, 2000.

\bibitem{Park1991}
J.~Park and I.~W. Sandberg.
\newblock Universal approximation using radial-basis-function networks.
\newblock {\em Neural Computation}, 3(2):246--257, 1991.

\bibitem{Pearson1894}
K.~Pearson.
\newblock Contribution to the mathematical theory of evolution.
\newblock {\em Philosophical Transactions A}, 185:71--110, 1894.

\bibitem{Polya1922}
G.~P\'{o}lya.
\newblock On the mean-value theorem corresponding to a given linear homogeneous
  differential equation.
\newblock {\em Trans. Amer. Math. Soc.}, 24(4):312--324, 1922.

\bibitem{Richardson1997}
Daniel Richardson.
\newblock How to recognize zero.
\newblock {\em J. Symb. Comp.}, 24:627--645, 1997.

\bibitem{Ridders}
C.~Ridders.
\newblock A new algorithm for computing a single root of a real continuous
  function.
\newblock {\em IEEE Transactions on Circuits and Systems}, 26(11):979--980,
  1979.

\bibitem{Ristroph1972}
Robert Ristroph.
\newblock P\'{o}lya's property {$W$} and factorization---{A} short proof.
\newblock {\em Proc. Amer. Math. Soc.}, 31:631--632, 1972.

\bibitem{Santosh2013}
D.~Hari~Hara Santosh, P.~Venkatesh, P.~Poornesh, L.~Narayana Rao, and N.~Arun
  Kumar.
\newblock Tracking multiple moving objects using gaussian mixture model.
\newblock {\em International Journal of Soft Computing and Engineering},
  3(2):114 -- 119, 2013.

\bibitem{Sexton2013}
Hugh~Conor Sexton.
\newblock {\em Financial modelling with 2-EPT probability density functions}.
\newblock {PhD Thesis}, University College Cork, 2013.

\bibitem{Titterington1985}
D.~M. Titterington, A.~F.~M. Smith, and U.~E. Makov.
\newblock {\em Statistical analysis of finite mixture distributions}.
\newblock Wiley Series in Probability and Mathematical Statistics: Applied
  Probability and Statistics. John Wiley \& Sons, Ltd., Chichester, 1985.

\bibitem{Vallander1974}
S.~S. Vallander.
\newblock Calculations of the {V}asser\v{s}te\u{\i}n distance between
  probability distributions on the line.
\newblock {\em Teor. Verojatnost. i Primenen.}, 18:824--827, 1973.

\bibitem{Yu2015}
Dong Yu and Li~Deng.
\newblock {\em Automatic speech recognition}.
\newblock Signals and Communication Technology. Springer, London, 2015.
\newblock A deep learning approach, With a foreword by Sadaoki Furui.

\end{thebibliography}
\end{document}